\documentclass[11pt]{amsart}
\usepackage{amsthm}
\usepackage{amsmath}
\usepackage{amssymb}
\usepackage{verbatim}
\usepackage{mathrsfs}
\usepackage{eucal}
\let\mathcal=\CMcal

\allowdisplaybreaks[4]

\begin{document}
\def\sbt{\raisebox{1.2pt}{$\scriptscriptstyle\,\bullet\,$}}

\def\alp{\alpha}
\def\bet{\beta}
\def\gam{\gamma}
\def\del{\delta}
\def\eps{\epsilon}
\def\zet{\zeta}
\def\tht{\theta}
\def\iot{\iota}
\def\kap{\kappa}
\def\lam{\lambda}
\def\sig{\sigma}
\def\ome{\omega}
\def\vep{\varepsilon}
\def\vth{\vartheta}
\def\vpi{\varpi}
\def\vrh{\varrho}
\def\vsi{\varsigma}
\def\vph{\varphi}
\def\Gam{\Gamma}
\def\Del{\Delta}
\def\Tht{\Theta}
\def\Lam{\Lambda}
\def\Sig{\Sigma}
\def\Ups{\Upsilon}
\def\Ome{\Omega}
\def\vka{\varkappa}
\def\vDe{\varDelta}
\def\vSi{\varSigma}
\def\vTh{\varTheta}
\def\vGm{\varGamma}
\def\vOm{\varOmega}
\def\vPi{\varPi}
\def\vPh{\varPhi}
\def\vPs{\varPsi}
\def\vUp{\varUpsilon}
\def\vXi{\varXi}

\def\frka{{\mathfrak a}}    \def\frkA{{\mathfrak A}}
\def\frkb{{\mathfrak b}}    \def\frkB{{\mathfrak B}}
\def\frkc{{\mathfrak c}}    \def\frkC{{\mathfrak C}}
\def\frkd{{\mathfrak d}}    \def\frkD{{\mathfrak D}}
\def\frke{{\mathfrak e}}    \def\frkE{{\mathfrak E}}
\def\frkf{{\mathfrak f}}    \def\frkF{{\mathfrak F}}
\def\frkg{{\mathfrak g}}    \def\frkG{{\mathfrak G}}
\def\frkh{{\mathfrak h}}    \def\frkH{{\mathfrak H}}
\def\frki{{\mathfrak i}}    \def\frkI{{\mathfrak I}}
\def\frkj{{\mathfrak j}}    \def\frkJ{{\mathfrak J}}
\def\frkk{{\mathfrak k}}    \def\frkK{{\mathfrak K}}
\def\frkl{{\mathfrak l}}    \def\frkL{{\mathfrak L}}
\def\frkm{{\mathfrak m}}    \def\frkM{{\mathfrak M}}
\def\frkn{{\mathfrak n}}    \def\frkN{{\mathfrak N}}
\def\frko{{\mathfrak o}}    \def\frkO{{\mathfrak O}}
\def\frkp{{\mathfrak p}}    \def\frkP{{\mathfrak P}}
\def\frkq{{\mathfrak q}}    \def\frkQ{{\mathfrak Q}}
\def\frkr{{\mathfrak r}}    \def\frkR{{\mathfrak R}}
\def\frks{{\mathfrak s}}    \def\frkS{{\mathfrak S}}
\def\frkt{{\mathfrak t}}    \def\frkT{{\mathfrak T}}
\def\frku{{\mathfrak u}}    \def\frkU{{\mathfrak U}}
\def\frkv{{\mathfrak v}}    \def\frkV{{\mathfrak V}}
\def\frkw{{\mathfrak w}}    \def\frkW{{\mathfrak W}}
\def\frkx{{\mathfrak x}}    \def\frkX{{\mathfrak X}}
\def\frky{{\mathfrak y}}    \def\frkY{{\mathfrak Y}}
\def\frkz{{\mathfrak z}}    \def\frkZ{{\mathfrak Z}}

\def\cal{\fam2}
\def\cala{{\cal A}}
\def\calb{{\cal B}}
\def\calc{{\cal C}}
\def\cald{{\cal D}}
\def\cale{{\cal E}}
\def\calf{{\cal F}}
\def\calg{{\cal G}}
\def\calh{{\cal H}}
\def\cali{{\cal I}}
\def\calj{{\cal J}}
\def\calk{{\cal K}}
\def\call{{\cal L}}
\def\calm{{\cal M}}
\def\caln{{\cal N}}
\def\calo{{\cal O}}
\def\calp{{\cal P}}
\def\calq{{\cal Q}}
\def\calr{{\cal R}}
\def\cals{{\cal S}}
\def\calt{{\cal T}}
\def\calu{{\cal U}}
\def\calv{{\cal V}}
\def\calw{{\cal W}}
\def\calx{{\cal X}}
\def\caly{{\cal Y}}
\def\calz{{\cal Z}}

\def\AA{{\mathbb A}}
\def\BB{{\mathbb B}}
\def\CC{{\mathbb C}}
\def\DD{{\mathbb D}}
\def\EE{{\mathbb E}}
\def\FF{{\mathbb F}}
\def\GG{{\mathbb G}}
\def\HH{{\mathbb H}}
\def\II{{\mathbb I}}
\def\JJ{{\mathbb J}}
\def\KK{{\mathbb K}}
\def\LL{{\mathbb L}}
\def\MM{{\mathbb M}}
\def\NN{{\mathbb N}}
\def\OO{{\mathbb O}}
\def\PP{{\mathbb P}}
\def\QQ{{\mathbb Q}}
\def\RR{{\mathbb R}}
\def\SS{{\mathbb S}}
\def\TT{{\mathbb T}}
\def\UU{{\mathbb U}}
\def\VV{{\mathbb V}}
\def\WW{{\mathbb W}}
\def\XX{{\mathbb X}}
\def\YY{{\mathbb Y}}
\def\ZZ{{\mathbb Z}}

\def\bfa{{\mathbf a}}    \def\bfA{{\mathbf A}}
\def\bfb{{\mathbf b}}    \def\bfB{{\mathbf B}}
\def\bfc{{\mathbf c}}    \def\bfC{{\mathbf C}}
\def\bfd{{\mathbf d}}    \def\bfD{{\mathbf D}}
\def\bfe{{\mathbf e}}    \def\bfE{{\mathbf E}}
\def\bff{{\mathbf f}}    \def\bfF{{\mathbf F}}
\def\bfg{{\mathbf g}}    \def\bfG{{\mathbf G}}
\def\bfh{{\mathbf h}}    \def\bfH{{\mathbf H}}
\def\bfi{{\mathbf i}}    \def\bfI{{\mathbf I}}
\def\bfj{{\mathbf j}}    \def\bfJ{{\mathbf J}}
\def\bfk{{\mathbf k}}    \def\bfK{{\mathbf K}}
\def\bfl{{\mathbf l}}    \def\bfL{{\mathbf L}}
\def\bfm{{\mathbf m}}    \def\bfM{{\mathbf M}}
\def\bfn{{\mathbf n}}    \def\bfN{{\mathbf N}}
\def\bfo{{\mathbf o}}    \def\bfO{{\mathbf O}}
\def\bfp{{\mathbf p}}    \def\bfP{{\mathbf P}}
\def\bfq{{\mathbf q}}    \def\bfQ{{\mathbf Q}}
\def\bfr{{\mathbf r}}    \def\bfR{{\mathbf R}}
\def\bfs{{\mathbf s}}    \def\bfS{{\mathbf S}}
\def\bft{{\mathbf t}}    \def\bfT{{\mathbf T}}
\def\bfu{{\mathbf u}}    \def\bfU{{\mathbf U}}
\def\bfv{{\mathbf v}}    \def\bfV{{\mathbf V}}
\def\bfw{{\mathbf w}}    \def\bfW{{\mathbf W}}
\def\bfx{{\mathbf x}}    \def\bfX{{\mathbf X}}
\def\bfy{{\mathbf y}}    \def\bfY{{\mathbf Y}}
\def\bfz{{\mathbf z}}    \def\bfZ{{\mathbf Z}}

\def\scra{{\mathscr A}}
\def\scrb{{\mathscr B}}
\def\scrc{{\mathscr C}}
\def\scrd{{\mathscr D}}
\def\scre{{\mathscr E}}
\def\scrf{{\mathscr F}}
\def\scrg{{\mathscr G}}
\def\scrh{{\mathscr H}}
\def\scri{{\mathscr I}}
\def\scrj{{\mathscr J}}
\def\scrk{{\mathscr K}}
\def\scrl{{\mathscr L}}
\def\scrm{{\mathscr M}}
\def\scrn{{\mathscr N}}
\def\scro{{\mathscr O}}
\def\scrp{{\mathscr P}}
\def\scrq{{\mathscr Q}}
\def\scrr{{\mathscr R}}
\def\scrs{{\mathscr S}}
\def\scrt{{\mathscr T}}
\def\scru{{\mathscr U}}
\def\scrv{{\mathscr V}}
\def\scrw{{\mathscr W}}
\def\scrx{{\mathscr X}}
\def\scry{{\mathscr Y}}
\def\scrz{{\mathscr Z}}

\def\phm{\phantom}
\def\smallstrut{\vphantom{\vrule height 3pt }}
\def\bdm #1#2#3#4{\left(
\begin{array} {c|c}{\ds{#1}}
 & {\ds{#2}} \\ \hline
{\ds{#3}\vphantom{\ds{#3}^1}} &  {\ds{#4}}
\end{array}
\right)}

\def\GL{\mathrm{GL}}
\def\PGL{\mathrm{PGL}}
\def\SL{\mathrm{SL}}
\def\Mp{\mathrm{Mp}}
\def\SU{\mathrm{SU}}
\def\SO{\mathrm{SO}}
\def\O{\mathrm{O}}
\def\U{\mathrm{U}}
\def\Mat{\mathrm{M}}
\def\Tr{\mathrm{Tr}}
\def\tr{\mathrm{tr}}
\def\ch{\mathrm{ch}}
\def\Nr{\mathrm{Nrd}}
\def\Ad{\mathrm{Ad}}
\def\ch{\mathrm{ch}}
\def\Her{\mathrm{Her}}
\def\Paf{\mathrm{Paf}}
\def\Pf{\mathrm{Pf}}
\def\Gal{\mathrm{Gal}}
\def\Sch{\mathrm{Sch}}
\def\Spec{\mathrm{Spec}\,}
\def\new{\mathrm{new}}
\def\Wh{\mathrm{Wh}}
\def\FJ{\mathrm{FJ}}
\def\Fj{\mathrm{Fj}}
\def\Sym{\mathrm{Sym}}
\def\Aut{\mathrm{Aut}}
\def\Dif{\mathrm{Diff}}
\def\GK{\mathrm{GK}}
\def\EGK{\mathrm{EGK}}
\def\supp{\mathrm{supp}}
\def\proj{\mathrm{proj}}
\def\vol{\mathrm{vol}}
\def\Hom{\mathrm{Hom}}
\def\End{\mathrm{End}}
\def\Ker{\mathrm{Ker}}
\def\Res{\mathrm{Res}}
\def\res{\mathrm{res}}
\def\cusp{\mathrm{cusp}}
\def\Irr{\mathrm{Irr}}
\def\rank{\mathrm{rank}}
\def\sgn{\mathrm{sgn}}
\def\diag{\mathrm{diag}}
\def\nd{\mathrm{nd}}
\def\lw{\mathrm{lw}}
\def\d{\mathrm{d}}
\def\ur{\mathrm{ur}}
\def\La{\langle}
\def\Ra{\rangle}
\newcommand{\lra}{\longrightarrow}
\newcommand{\lla}{\longleftarrow}

\def\trs{\,^t\!}
\def\tri{\,^\iot\!}
\def\iu{\sqrt{-1}}
\def\oo{\hbox{\bf 0}}
\def\ono{\hbox{\bf 1}}
\def\smallcirc{\lower .3em \hbox{\rm\char'27}\!}
\def\AAf{{\AA_\bff}}
\def\thalf{\tfrac{1}{2}}
\def\bsl{\backslash}
\def\wtl{\widetilde}
\def\til{\tilde}
\def\Ind{\operatorname{Ind}}
\def\ind{\operatorname{ind}}
\def\cind{\operatorname{c-ind}}
\def\ord{\operatorname{ord}}
\def\beq{\begin{equation}}
\def\eeq{\end{equation}}
\def\d{\mathrm{d}}
\newcommand{\bF}{\overline{\mathbb{F}}}

\newcounter{one}
\setcounter{one}{1}
\newcounter{two}
\setcounter{two}{2}
\newcounter{thr}
\setcounter{thr}{3}
\newcounter{fou}
\setcounter{fou}{4}
\newcounter{fiv}
\setcounter{fiv}{5}
\newcounter{six}
\setcounter{six}{6}
\newcounter{sev}
\setcounter{sev}{7}

\newcommand{\shp}{\rm\char'43}

\def\lddots{\mathinner{\mskip1mu\raise1pt\vbox{\kern7pt\hbox{.}}\mskip2mu\raise4pt\hbox{.}\mskip2mu\raise7pt\hbox{.}\mskip1mu}}
\newcommand{\1}{1\hspace{-0.25em}{\rm{l}}}

\makeatletter
\def\varddots{\mathinner{\mkern1mu
    \raise\p@\hbox{.}\mkern2mu\raise4\p@\hbox{.}\mkern2mu
    \raise7\p@\vbox{\kern7\p@\hbox{.}}\mkern1mu}}
\makeatother

\def\today{\ifcase\month\or
 January\or February\or March\or April\or May\or June\or
 July\or August\or September\or October\or November\or December\fi
 \space\number\day, \number\year}

\makeatletter
\def\varddots{\mathinner{\mkern1mu
    \raise\p@\hbox{.}\mkern2mu\raise4\p@\hbox{.}\mkern2mu
    \raise7\p@\vbox{\kern7\p@\hbox{.}}\mkern1mu}}
\makeatother

\def\today{\ifcase\month\or
 January\or February\or March\or April\or May\or June\or
 July\or August\or September\or October\or November\or December\fi
 \space\number\day, \number\year}

\makeatletter
\@addtoreset{equation}{section}
\def\theequation{\thesection.\arabic{equation}}

\theoremstyle{plain}
\newtheorem{theorem}{Theorem}[section]
\newtheorem*{main_proposition}{Proposition \ref{prop:51}}
\newtheorem*{main_theorem}{Theorem \ref{thm:51}}
\newtheorem{lemma}[theorem]{Lemma}
\newtheorem{proposition}[theorem]{Proposition}
\theoremstyle{definition}
\newtheorem{definition}[theorem]{Definition}
\newtheorem{conjecture}[theorem]{Conjecture}
\theoremstyle{remark}
\newtheorem{remark}[theorem]{Remark}
\newtheorem*{main_remark}{Remark}
\newtheorem{corollary}[theorem]{Corollary}
\newtheorem*{main_corollary}{Corollary \ref{cor:51}}

\renewcommand{\thepart}{\Roman{part}}
\setcounter{tocdepth}{1}
\setcounter{section}{0} 


\title[Derivatives of Eisenstein series of genus $g$ and weight $\frac{g}{2}$]{Derivatives of Eisenstein series of weight $2$ and intersections of modular correspondences}

\author[Sungmun Cho, Shunsuke Yamana and Takuya Yamauchi]{Sungmun Cho, Shunsuke Yamana and Takuya Yamauchi}

\address{Sungmun Cho, Department of
Mathematics, Kyoto University, JAPAN}
\email{sungmuncho12@gmail.com}

\address{Shunsuke Yamana, Hakubi Center, Yoshida-Ushinomiya-cho, Sakyo-ku, Kyoto, 606-8501, Japan}
\address{Department of mathematics, Kyoto University, Kitashirakawa Oiwake-cho, Sakyo-ku, Kyoto, 606-8502, Japan}
\address{Max Planck Institut f\"{u}r Mathematik, Vivatsgasse 7, 53111 Bonn, Germany}
\email{yamana07@math.kyoto-u.ac.jp}
\address{Takuya Yamauchi,
Mathematical Institute, Tohoku University, 
 6-3,Aoba, Aramaki, Aoba-Ku, Sendai 980-8578, JAPAN}
\email{yamauchi@math.tohoku.ac.jp}
\begin{abstract}
We give a formula for certain values and derivatives of Siegel series and use them to compute Fourier coefficients of derivatives of the Siegel Eisenstein series of weight $\frac{g}{2}$ and genus $g$. 
When $g=4$, the Fourier coefficient is approximated by a certain Fourier coefficient of the central derivative of the Siegel Eisenstein series of weight $2$ and genus $3$, which is related to the intersection of $3$ arithmetic modular correspondences.  
Applications include a relation between weighted averages of representation numbers of symmetric matrices. 
\end{abstract}
\keywords{Eisenstein series, arithmetic intersection numbers, modular correspondence} 
\subjclass[2010]{11F30, 11F32} 
\maketitle


\section{Introduction}
\subsection{Motivation : On the modular correspondences}
Let $j=j'=j(\tau)$ be the elliptic modular function on the upper half plane. 
For $m\geq 1$ let $\vph_m\in\ZZ[j,j']$ be the classical modular polynomial defined by 
\[\vph_m(j(\tau),j(\tau'))=\prod_{A\in\Mat_2(\ZZ)\pmod{\SL_2(\ZZ)},\;\det A=m}(j(\tau)-j(A\tau')). \]
Put $S=\Spec\ZZ[j,j']$ and $S_\CC=\Spec\CC[j,j']$. 
Let $T_m$ and $T_{m,\CC}$ be the arithmetic and geometric divisors defined by $\vph_m=0$. 
We can view $S$ as an arithmetic threefold $\cals=\calm\times_{\Spec\ZZ}\calm$, where $\calm$ is the moduli stack of elliptic curves over $\ZZ$, and $T_m$ as the moduli stack $\calt_m$ of isogenies of elliptic curves of degree $m$.  
In the 19th century Hurwitz has computed the intersection  
\[(T_{m_1,\CC}\cdot T_{m_2,\CC}):=\dim_\CC\CC[j,j']/(\vph_{m_1},\vph_{m_2}) \]
of complex curves. 
Gross and Keating \cite{GK} discovered that $(T_{m_1,\CC}\cdot T_{m_2,\CC})$ is related to the Fourier coefficients of the Siegel Eisenstein series of weight $2$ for $Sp_2(\ZZ)$. 
Moreover, they gave an explicit expression for the intersection 
\[(T_{m_1}\cdot T_{m_2}\cdot T_{m_3}):=\log\sharp\ZZ[j,j']/(\vph_{m_1},\vph_{m_2},\vph_{m_3}) \] 
of $3$ arithmetic modular correspondences. 
It is already mentioned in the introduction of \cite{GK} that computations of Kudla or Zagier strongly suggest that $\deg\scrz(B)$ equals  the $B$-th Fourier coefficient of the derivative of the Siegel Eisenstein series of weight $2$ for $Sp_3(\ZZ)$, up to  multiplication by a constant which is independent of $B$. 
A complete proof of this identity has been given in \cite{RW} (cf. \cite{KR}). 

The purpose of this paper is to compute the Fourier coefficients of the derivative of the Siegel Eisenstein series of weight $2$ for $Sp_4(\ZZ)$. 
One may expect that these coefficients are related to the intersection of $4$ modular correspondences. 
However, the number
\[\log\sharp\ZZ[j,j']/(\vph_{m_1},\vph_{m_2},\vph_{m_3},\vph_{m_4}), \]
does not seem to be naturally expanded to a sum over positive semi-definite symmetric half-integral matrices of size $4$ and does not seem to be a right object. 
The fiber product $\calt_{m_1}\times_\cals\calt_{m_2}\times_\cals\calt_{m_3}\times_\cals\calt_{m_4}$ has a disjoint sum decomposition according to the values of the fundamental matrices: 
\[\calt_{m_1}\times_\cals\calt_{m_2}\times_\cals\calt_{m_3}\times_\cals\calt_{m_4}=\bigsqcup_T\scrz(T), \] 
where $T$ extends over the set of positive semi-definite symmetric half-integral matrices of size $4$ with diagonal entries $m_1,m_2,m_3,m_4$. 
If $T$ is positive definite, then $\scrz(T)$ is empty unless $\det T$ is a square and $T$ is split except over a single prime. 
If $T$ is positive definite and $\det T$ is a square, then the $T$-th Fourier coefficient is zero unless $T$ is anisotropic only at a prime $p$, in which case the $T$-th Fourier coefficient is approximately equal to $\deg\scrz(T')$, where $T'$ is some positive semi-definite symmetric half-integral matrix of size $3$ (see Theorem \ref{thm:12}).  
Our result may imply that for each point of the intersection, where $4$ surfaces intersect properly, in a small neighborhood of the point, the intersection multiplicity behaves like the intersection multiplicity of $3$ surfaces of them.


\medskip

In the intervening years Kudla and others have gone a long way towards proving such relations in much greater generality. 
In \cite{K1}, he introduced a certain family of Eisenstein series of genus $g$ and weight $\frac{g+1}{2}$. 
They have an odd functional equation and hence have a natural zero at their center of symmetry. 
The central derivatives of such series, which he refers to as incoherent Eisenstein series, have a connection with arithmetic algebraic geometry of cycles on integral models of Shimura varieties attached to orthogonal groups of signature $(2,g-1)$, at least when $g\leq 4$. 
We refer the reader to \cite{KRY} for $g=1$, to \cite{K1,KR2,KRY2} for $g=2$, to \cite{KR,T1,RW} for $g=3$, and to \cite{KR3} for $g=4$. 
However, 
there are serious problems with the construction of arithmetic models of these Shimura varieties as soon as $g\geq 5$. 

\subsection{The Fourier coefficients of  derivative of Eisenstein series}
In this paper we compute the Fourier coefficients of derivatives of incoherent Eisenstein series of genus $g$ and weight $\frac{g}{2}$. 
In this introductory section we will consider classical Eisenstein series of level $1$. 
Let $g$ be a positive integer that is divisible by $4$. 
Let 
\[E_g(Z,s)=\sum_{\{C,D\}}\det(CZ+D)^{-g/2}|\det(CZ+D)|^{-s}(\det Y)^{s/2}\]
be the Siegel Eisenstein series of genus $g$, where $\{C,D\}$ runs over a complete set of representatives of the equivalence classes of coprime
symmetric pairs of degree $g$, and $Z$ is a complex symmetric matrix of degree $g$ with positive definite imaginary part $Y$. 
This series converges absolutely for $\Re s>\frac{g}{2}+1$ and admits a meromorphic continuation to the whole $s$-plane by the general theory of Langlands. 

If $\frac{g}{4}$ is even, then $E_g(Z,s)$ is holomorphic at $s=0$ and the $T$-th Fourier coefficient of $E_g(Z,0)$ is equal to 
\beq
2\biggl(\sum_i\frac{1}{N(L_i,L_i)}\biggl)^{-1}\sum_i\frac{N(L_i,T)}{N(L_i,L_i)} \label{tag:11}
\eeq
by the Siegel formula (see \cite{Si1,KuR,Y1}), where $\{L_i\}$ is the set of isometry classes of positive definite even unimodular lattices of rank $g$. 
Here $N(L,L')$ denotes the number of isometries $L'\to L$ for two quadratic spaces $L,L'$ over $\ZZ$. 
In particular, the nondegenerate Fourier coefficients are supported on a single rational equivalence class.  

On the other hand, if $\frac{g}{4}$ is odd, then $E_g(Z,s)$ has a zero at $s=0$. 
Our main object of study in this paper is the derivative  
\[\frac{\partial}{\partial s}E_g(Z,s)|_{s=0}=\sum_{T>0}C_g(T)e^{2\pi\iu\tr(TZ)}+\sum_{\text{other }T}C_g(T,Y)e^{2\pi\iu\tr(TZ)}. \]

Fix a positive definite symmetric half-integral $n\times n$ matrix $T$ and a rational prime $p$. 
Let $\QQ^{(p)}$ be a subring of $\QQ$, consisting of the numbers of the form $\frac{a}{p^{n}}$ with $n\in\NN$ and $a\in\ZZ$.
We define the additive character $\bfe_p$ of $\QQ_p$ by setting $\bfe_p(x)=e^{-2\pi\iu y}$ with $y\in\QQ^{(p)}$ such that $x-y\in\ZZ_p$. 
The Siegel series attached to $T$ and $p$ is defined by
\[b_p(T,s)=\sum_{z\in\Sym_n(\QQ_p)/\Sym_n(\ZZ_p)} \bfe_p(-\tr(Tz))\nu[z]^{-s},\]
where $\nu[z]$ is the product of denominators of elementary divisors of $z$. 
Put $D_T=(-4)^{[n/2]}\det T$. 
We denote the primitive Dirichlet character corresponding to $\QQ(\sqrt{D_T})$ by $\chi_T$ and its conductor by $\frkd^T$. 
Put $\xi^T_p=\chi_T(p)$. 
Let $e^T_p=\ord_pD_T$ or $e^T_p=\ord_pD_T-\ord_p\frkd^T$ according as $n$ is odd or even. 
There exists a polynomial $F_p^T(X)\in\ZZ[X]$ such that 
\[b_p(T,s)=\gam^T_p(p^{-s})F^T_p(p^{-s}), \]
where
\[\gam^T_p(X)
=(1-X)\prod_{j=1}^{[n/2]}(1-p^{2j}X^2)\times
\begin{cases}
1 &\text{if $n$ is odd, } \\
\frac{1}{1-\xi^T_p p^{n/2}X} &\text{if $n$ is even. }
\end{cases} \]
The symbol $\eta_p^T$ stands for the normalized Hasse invariant of $T$ over $\QQ_p$ (see Definition \ref{def:21}). 
We write $\Dif(T)$ for the finite set of prime numbers $p$ such that $\eta_p^T=-1$. 
A direct calculation gives the following formula:  

\begin{main_proposition}
Assume that $\frac{g}{4}$ is odd. 
Let $T$ be a positive definite symmetric half-integral matrix of size $g$. 
\begin{enumerate}
\renewcommand\labelenumi{(\theenumi)}
\item If $\chi_T=1$, then $C_g(T)=0$ unless $\Dif(T)$ is a singleton. 
\item If $\chi_T=1$ and $\Dif(T)=\{p\}$, then 
\[C_g(T)=-\frac{2^{(g+2)/2}p^{-(g+e^T_p)/2}\log p}{\zet\bigl(1-\frac{g}{2}\bigl)\prod_{i=1}^{(g-2)/2}\zet(1-2i)}\frac{\partial F^T_p}{\partial X}(p^{-g/2})\prod_{p\neq \ell|D_T}\ell^{-e^T_\ell/2}F_\ell^T(\ell^{-g/2}). \]
\item If $\chi_T\neq 1$, then  
\[C_g(T)=-\frac{2^{(g+2)/2}L(1,\chi_T)}{\zet\bigl(1-\frac{g}{2}\bigl)\prod_{i=1}^{(g-2)/2}\zet(1-2i)}\prod_{\ell|D_T}p^{-e^T_\ell/2}F_\ell^T(\ell^{-g/2}). \]
\end{enumerate}
\end{main_proposition}

\begin{remark}\label{rem:11}
If $\chi_T\neq 1$, then $L(1,\chi_T)=\frac{\sqrt{\frkd^T}}{\log\eps}h$ by Dirichlet's class number formula, where $h$ is the class number of the real quadratic field $\QQ(\sqrt{\det T})$ and $\eps=\frac{t+u\sqrt{\frkd^T}}{2}$ ($t>0$, $u>0$) is the solution to the Pell equation $t^2-\frkd^Tu^{2}=4$ for which $u$ is smallest. 
\end{remark}

The following theorem is a special case of Theorem \ref{thm:41} and allows us to compute $\frac{\partial F^T_p}{\partial X}(\xi_p^Tp^{-g/2})$. 
For simplicity we here assume $p$ to be odd. 

\begin{theorem}\label{thm:11}
Let $p$ be an odd rational prime and $T=\diag[t_1,\dots,t_g]$ with $0\leq\ord_pt_1\leq\cdots\leq\ord_pt_g$. 
Put $T'=\diag[t_1,\dots,t_{g-1}]$.  
Suppose that $g$ is even and $p\nmid\frkd^T$. 
Then
\[F_p^T(\xi_p^Tp^{-g/2})=p^{e^T_p/2}F_p^{T'}(\xi_p^Tp^{-g/2}). \]
If $\eta^T_p=-1$, then
\[\frac{\xi^T_p}{p^{g/2}}\frac{\partial F_p^T}{\partial X}\biggl(\frac{\xi^T_p}{p^{g/2}}\biggl)
=\frac{F_p^{T'}(\xi^T_p p^{(2-g)/2})}{p-1}
-p^{e_p^T/2}\frac{\xi^T_p}{p^{g/2}}\frac{\partial F_p^{T'}}{\partial X}\biggl(\frac{\xi^T_p}{p^{g/2}}\biggl). \]
\end{theorem}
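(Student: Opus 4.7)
The approach is to use the explicit reduction formula for $F_p^T(X)$ for diagonal $T$ over $\QQ_p$ with $p$ odd, e.g.\ in the form developed by Katsurada or via the explicit formula of Ikeda--Katsurada in terms of the $\EGK$ datum. Since $T=\diag[t_1,\dots,t_g]$ is already diagonal with ordered valuations $a_i=\ord_p t_i$ and unit parts $u_i=t_i p^{-a_i}\in\ZZ_p^\times$, the local invariants are immediate: writing $\epsilon_i=\left(\tfrac{u_i}{p}\right)$, the unramified hypothesis $p\nmid\frkd^T$ is equivalent to $e_p^T=a_1+\cdots+a_g$ being even, and then $\xi_p^T=\epsilon_1\cdots\epsilon_g\in\{\pm 1\}$ while $\xi_p^T\epsilon_g$ controls how $\xi_p^T$ degenerates to $\xi_p^{T'}$.

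The key input I would establish is a reduction identity of the shape
\[
F_p^T(X)=A(X)\,F_p^{T'}(X)+B(X)\,F_p^{T'}(pX),
\]
where $A(X),B(X)\in\ZZ[X,p^{-1}]$ are explicit polynomials depending on $a_g$, $\epsilon_g$ and $p$. Such a relation can be derived either by decomposing the defining sum $b_p(T,s)=\sum_z\bfe_p(-\tr(Tz))\nu[z]^{-s}$ according to the Jordan block corresponding to the last diagonal entry and integrating out that block, or by direct specialization of the Ikeda--Katsurada formula. For the first identity, I substitute $X=\xi_p^T p^{-g/2}$ into this reduction: the coefficient $B$ vanishes there because the combinatorial sign $\xi_p^T\epsilon_g$ together with the power $p^{-g/2\cdot a_g}$ force cancellation, and what remains is $A(\xi_p^T p^{-g/2})=p^{e_p^T/2}$, giving the claimed $p^{e_p^T/2}F_p^{T'}(\xi_p^T p^{-g/2})$.

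For the derivative identity, I differentiate the reduction formula in $X$ and evaluate at $X=\xi_p^T p^{-g/2}$. The $B(X)F_p^{T'}(pX)$ term contributes, through $B'(X)F_p^{T'}(pX)$, the shifted value $F_p^{T'}(\xi_p^T p^{(2-g)/2})$ with an explicit scalar, while $A(X)F_p^{T'}(X)$ contributes $A(\xi_p^T p^{-g/2})\frac{\partial F_p^{T'}}{\partial X}(\xi_p^T p^{-g/2})$ plus a term proportional to $F_p^{T'}(\xi_p^T p^{-g/2})$. The assumption $\eta_p^T=-1$ is exactly what is needed at this stage: it forces the residual $F_p^{T'}$ contribution (coming from $A'$ and from $pB'$ evaluated at the same point) to collapse to the clean factor $\frac{1}{p-1}$, and it fixes the coefficient of $\frac{\partial F_p^{T'}}{\partial X}$ as $-p^{e_p^T/2}\cdot\xi_p^T p^{-g/2}$, yielding the displayed identity.

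The main obstacle is the careful bookkeeping of signs ($\xi_p^T$, $\eta_p^T$, the $\epsilon_i$) and powers of $p$ across the parity change from the odd-size matrix $T'$ to the even-size matrix $T$, which alters the $\gam_p$ factor in the definition of $F_p^T$ and inserts the $(1-\xi_p^T p^{g/2}X)^{-1}$ denominator. Verifying that the hypothesis $\eta_p^T=-1$ is \emph{exactly} the combinatorial condition that produces the clean coefficient $\frac{1}{p-1}$ — rather than a more intricate rational function of $p$ — is the delicate point, and translates under Ikeda--Katsurada's formulation into a parity statement about the $\EGK$ datum that can be checked directly from $(a_1,\dots,a_g;\epsilon_1,\dots,\epsilon_g)$.
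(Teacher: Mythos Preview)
Your overall strategy---a two--term reduction of $F_p^T$ to $F_p^{T'}$, followed by evaluation and differentiation at $X=\xi_p^Tp^{-g/2}$---is exactly the shape of the paper's argument. The paper, however, works not with your proposed recursion $F_p^T(X)=A(X)F_p^{T'}(X)+B(X)F_p^{T'}(pX)$ but with the Ikeda--Katsurada formula in the symmetric variable $\calf^H(Y,X)$, which expresses $\calf^{\EGK(T)}(Y,X)$ as a sum of two terms involving $\calf^{\EGK(T)'}(Y,YX)$ and $\calf^{\EGK(T)'}(Y,YX^{-1})$, with explicit coefficients containing the linear factor $1-\xi^TY^{-1}X$. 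Setting $Y=\sqrt{p}$ and $X=\xi^T\sqrt{p}$, this linear factor vanishes on the nose, so the first identity is immediate---no ``combinatorial sign and power cancellation'' is needed. The minus sign on the derivative term in the second identity then comes cleanly from the chain rule applied to the $X^{-1}$ argument, something your form with arguments $X$ and $pX$ does not obviously produce.

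More importantly, you have misidentified the role of $\eta_p^T=-1$. It is \emph{not} used to make the residual $F_p^{T'}(\xi_p^Tp^{-g/2})$ contribution ``collapse to $\tfrac{1}{p-1}$''; rather, $\eta_p^T=-1$ forces $F_p^T(\xi_p^Tp^{-g/2})=0$, and hence (by the first identity) $F_p^{T'}(\xi_p^Tp^{-g/2})=0$, so that the $A'(X)F_p^{T'}(X)$ term simply vanishes. The factor $\tfrac{1}{p-1}$ arises independently of $\eta_p^T$: it is the value at $X=\xi^T\sqrt{p}$ of the derivative of $\dfrac{1-\xi^Tp^{-1/2}X}{X^{-1}-X}$, the coefficient of the term whose evaluation at the critical point is $F_p^{T'}(\xi_p^Tp^{(2-g)/2})$. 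Without correctly isolating this vanishing, your bookkeeping for the derivative will not close.
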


Our key ingredient is the explicit formula for $F_p^T(X)$, given by Ikeda and Katsurada in \cite{IK2}, which expresses the polynomial $F_p^T$ in terms of the (naive) extended Gross--Keating datum $H$ of $T$ over $\ZZ_p$. 
The polynomial $F_p^{T'}=F_p^{H'}$ is defined in terms of a subset $H'\subsetneq H$ for any $p$ in a uniform way. 
Actually, if $g=4$, then the values $\frac{\partial F^{H'}_p}{\partial X}(p^{-2})$ and $F_p^{H'}(p^{-1})$ depend only on $(a_1,a_2,a_3)$ if we write $(a_1,a_2,a_3,a_4)$ for the Gross--Keating invariant of $T$ over $\ZZ_p$.

\subsection{Applications}
\subsubsection{On the average of the representation numbers}
Theorem \ref{thm:11} combined with the Siegel formula will identify (\ref{tag:11}) with four times the average of the representation numbers of a symmetric matrix of size $g-1$ (see Conjecture \ref{conj:51} and Proposition \ref{prop:52}). 
The following result is a special case of Proposition \ref{prop:52}. 

\begin{main_corollary}
If $T$ is a positive definite symmetric half-integral matrix of size $4$ which satisfies $\chi^T=1$ and $\eta^T_\ell=1$ for $\ell\neq p$, then there exists a positive definite symmetric half-integral matrix $T'$ of size $3$ such that 
\[\sum_{(E',E)}\frac{N(\Hom(E',E),T)}{\sharp\Aut(E)\sharp\Aut(E')}=2\sum_{(E',E)}\frac{N(\Hom(E',E),T')}{\sharp\Aut(E)\sharp\Aut(E')}, \]
where $(E,E')$ extends over all pairs of isomorphism classes of supersingular elliptic curves over $\bar\FF_p$. 
\end{main_corollary}

\subsubsection{On the Fourier coefficients and the modular correspondences}
The factor $\frac{\partial F_p^{H'}}{\partial X}(\xi^T_pp^{-g/2})$ appears in Fourier coefficients of central derivatives of incoherent Eisenstein series of genus $g-1$ and weight $\frac{g}{2}$, which have close connection with arithmetical geometry on Shimura varieties at least for $g\leq 5$ as mentioned above. 
We will be mostly interested in the case $g=4$. 
When $T_{m_1}$, $T_{m_2}$ and $T_{m_3}$ intersect properly, the formula of Gross and Keating in \cite{GK} can be stated as follows:  
\[(T_{m_1}\cdot T_{m_2}\cdot T_{m_3})=\sum_B\deg\scrz(B), \]
where $B$ extends over all positive definite symmetric half-integral matrices with diagonal entries $m_1,m_2,m_3$. 
Here $\deg\scrz(B)=0$ unless $\Dif(B)$ consists of a single rational prime $p$, in which case 
\beq
\deg\scrz(B)=-\frac{(\log p)}{2p^2}\frac{\partial F^B_p}{\partial X}\left(\frac{1}{p^2}\right)\sum_{(E,E')}\frac{N(\Hom(E',E),B)}{\sharp\Aut(E)\sharp\Aut(E')}. \label{tag:12}
\eeq
The degree $\deg\scrz(B)$ equals the $B$-th Fourier coefficient of the derivative of the Siegel Eisenstein series of weight $2$ and genus $3$ up to a negative constant (cf. Theorem 2.2 of \cite{RW}). 
We combine (\ref{tag:12}), Theorem \ref{thm:51} and Corollary \ref{cor:51} to obtain the following formula: 

\begin{theorem}\label{thm:12}
If $T$ is a positive definite symmetric half-integral matrix of size $4$, $\chi_T=1$ and $\Dif(T)$ consists of a single prime number $p$, then there exists a positive definite symmetric half-integral matrix $T'$ of size $3$ such that 
\[\frac{C_4(T)}{-2^8\cdot 3^2}=\deg\scrz(T')+\frac{F_p^{T'}(p^{-1})}{2\sqrt{p}^{e^T_p}(p-1)}\log p\sum_{(E,E')}\frac{N(\Hom(E',E),T')}{\sharp\Aut(E)\sharp\Aut(E')}, \]
where $(E,E')$ extends over all pairs of isomorphism classes of supersingular elliptic curves over $\bar\FF_p$. 
\end{theorem}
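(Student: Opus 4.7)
The plan is to combine Proposition \ref{prop:51} with Theorem \ref{thm:11} (and, for $p=2$, its extension Theorem \ref{thm:41}) and the Gross--Keating formula \eqref{tag:12} applied to a rank-$3$ companion $T'$. First I would specialize Proposition \ref{prop:51} at $g=4$. Using $\zeta(-1)^{2}=\frac{1}{144}$ and noting $2^{3}\cdot 144=1152$ together with $2^{8}\cdot 3^{2}=2\cdot 1152$, one obtains
\[
\frac{C_4(T)}{-2^{8}\cdot 3^{2}}
=\frac{\log p}{2\,p^{2+e_p^T/2}}\cdot\frac{\partial F_p^T}{\partial X}(p^{-2})\prod_{\ell\neq p,\;\ell|D_T}\ell^{-e_\ell^T/2}F_\ell^T(\ell^{-2}).
\]

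The next step is to select a positive definite symmetric half-integral matrix $T'$ of size $3$ realizing, at every prime, the canonical ``sub-datum'' of the extended Gross--Keating datum of $T$ featured in Theorem \ref{thm:41}. Since $\chi_T=1$ we have $\frkd^{T}=1$ and $\xi_\ell^{T}=1$ for every $\ell$, so the first identity of Theorem \ref{thm:11} (and of Theorem \ref{thm:41} when $\ell=2$) yields $\ell^{-e_\ell^T/2}F_\ell^T(\ell^{-2})=F_\ell^{T'}(\ell^{-2})$ for all $\ell\neq p$, while the derivative identity at $p$, applicable because $\eta_p^{T}=-1$, gives
\[
\frac{\partial F_p^T}{\partial X}(p^{-2})=\frac{p^{2}\,F_p^{T'}(p^{-1})}{p-1}-p^{e_p^T/2}\frac{\partial F_p^{T'}}{\partial X}(p^{-2}).
\]

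Substituting both identities produces the clean splitting
\[
\frac{C_4(T)}{-2^{8}\cdot 3^{2}}
=\frac{F_p^{T'}(p^{-1})\log p}{2\,\sqrt{p}^{\,e_p^T}(p-1)}\prod_{\ell\neq p}F_\ell^{T'}(\ell^{-2})
-\frac{\log p}{2p^{2}}\frac{\partial F_p^{T'}}{\partial X}(p^{-2})\prod_{\ell\neq p}F_\ell^{T'}(\ell^{-2}).
\]
To finish, I would invoke the Siegel mass formula for the genus of the positive definite rank-$4$ quaternionic lattices $\Hom(E',E)$ (for supersingular $E,E'$ over $\bar\FF_p$) to identify
\[
\prod_{\ell\neq p}F_\ell^{T'}(\ell^{-2})=\sum_{(E,E')}\frac{N(\Hom(E',E),T')}{\sharp\Aut(E)\sharp\Aut(E')},
\]
recognizing $F_\ell^{T'}(\ell^{-2})$ at $\ell\neq p$ as the local representation density of $T'$ by the rank-$4$ lattice $\Hom(E',E)$. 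With this identification the second term in the splitting coincides with $\deg\scrz(T')$ by \eqref{tag:12}, while the first is exactly the term displayed in the theorem.

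The main obstacle is the globally consistent construction of $T'$ together with the check that $\Dif(T')=\{p\}$ (ensuring \eqref{tag:12} applies non-trivially); this is a local--global compatibility question for EGK data and is precisely what makes Theorem \ref{thm:41} the natural framework rather than the diagonal version of Theorem \ref{thm:11}. A secondary but essential step is the Siegel mass-formula identification, which must hold on the nose at every prime (including $\ell=2$) rather than up to a fixed constant, and is again a prime-by-prime verification against the EGK-theoretic expression of $F_\ell^{T'}(\ell^{-2})$.
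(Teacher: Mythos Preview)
Your outline is correct, but the route differs from the paper's. The paper deduces Theorem \ref{thm:12} in one line from three already-established results: Theorem \ref{thm:51}, which expresses $C_4(T)$ directly as
\[2^6\cdot 3^2\biggl(p^{-2}\frac{\partial F^{H'}_p}{\partial X}(p^{-2})-\frac{F_p^{H'}(p^{-1})}{\sqrt{p}^{e^T_p}(p-1)}\biggl)\log p\cdot R(\calo_p,T)\]
with $H'=\EGK_p(T)'$; Corollary \ref{cor:51}, which replaces $R(\calo_p,T)$ by $2R(\calo_p,T')$; and formula \eqref{tag:12}. Thus the paper first invokes the Siegel--Weil formula for the rank-$4$ matrix $T$ (this is the content of Theorem \ref{thm:51}, proved via Proposition \ref{prop:33}) and only afterwards passes to $T'$.

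You invert this order: you stay in the Euler-product form of Proposition \ref{prop:51}, reduce each local factor from $T$ to $T'$ via Theorem \ref{thm:41}, and only then collapse $\prod_{\ell\neq p}F_\ell^{T'}(\ell^{-2})$ into the supersingular sum by a single application of the Siegel mass formula for the ternary form $T'$. This is a legitimate and slightly more economical reorganisation, but the step you call ``secondary'' is the crux: $F_\ell^{T'}(\ell^{-2})$ is not literally the local density (the latter is $(1-\ell^{-2})^2F_\ell^{T'}(\ell^{-2})$), so the identity $\prod_{\ell\neq p}F_\ell^{T'}(\ell^{-2})=R(\calo_p,T')$ requires computing the Eichler mass $\frkm(\calo_p)$, the archimedean density, and the anomalous factor $\alp_p(S_p,T')$ and checking that they cancel the surplus $\zet(2)^{-2}$ exactly. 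That computation is precisely what the proof of Proposition \ref{prop:52} carries out (for $T$ and $T'$ simultaneously), so both approaches rest on the same arithmetic input; the paper's packaging simply isolates Theorem \ref{thm:51} and Corollary \ref{cor:51} as results of independent interest. Your ``main obstacle''—the global construction of $T'$ with the correct local Siegel series at every prime, including $p$—is handled by Lemma \ref{lem:41} together with the Hasse-principle argument in Proposition \ref{prop:52}.
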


Since $\Hom(E',E)$ is a quaternary quadratic space, if $S$ has rank greater than $4$, then $N(\Hom(E,E'),S)=0$. 
Therefore when $g\geq 5$, the nature of Fourier coefficients of the derivative of Eisenstein series of weight $2$ and genus $g$ should be much different. 
The case $g=4$ should be a boundary case. 
We will explicitly compute $F_p^{T'}(p^{-1})$ in Lemma \ref{lem:53} and show that 
\[\biggl|\frac{C_4(T)}{-2^8\cdot 3^2\cdot \deg\scrz(T')}-1\biggl|<\frac{20}{p\sqrt{p}}. \] 
Moreover, Corollary \ref{cor:52} says that for a fixed prime number $p$
\[\lim_{\ord_p(\det T)\to\infty}\frac{C_4(T)}{-2^8\cdot 3^2\cdot\deg\scrz(T')}=1. \] 

\subsection{Organizations}
We now explain the lay-out of this paper. 
Section \ref{sec:2} extends the notion of incoherent Eisenstein series to the case where the point at which the Eisenstein series is evaluated lies within the left half-plane. 
We calculate the Fourier coefficients of those Eisenstein series and their derivatives. 
In Section \ref{sec:3} we derive a general formula for Fourier coefficients of derivatives of incoherent Eisenstein series. 
Section \ref{sec:4} is devoted to a local study of the Siegel series.
We give the inductive expression for the special value of the derivative of the Siegel series. 
Section \ref{sec:5} is devoted to proving Theorem \ref{thm:51}. 

\subsection*{Acknowledgement}
Cho is  supported by JSPS KAKENHI Grant No. 16F16316.
Yamana is partially supported by JSPS Grant-in-Aid for Young Scientists (B) 26800017. 
We would like to thank Stephen Kudla for very stimulating discussions. 


\section*{\bf Notations}

For a finite set $A$, we denote by $\sharp A$ the number of elements in $A$. 
For a ring $R$ we denote by $\Mat_{i,j}(R)$ the set of $i\times j$-matrices with entries in $R$ and write $\Mat_m(R)$ in place of $\Mat_{m,m}(R)$. 
The group of all invertible elements of $\Mat_m(R)$ and the set of symmetric matrices of size $m$ with entries in $R$ are denoted by $\GL_m(R)$ and $\Sym_m(R)$, respectively. 
Let $\cale_m(R)$ be the set of elements $(a_{ij})\in\Sym_m(R)$ such that $a_{ii}\in 2R$ for every $i$. 
For matrices $B\in\Sym_m(R)$ and $G\in\Mat_{m,n}(R)$ we use the abbreviation $B[G]=\trs GBG$, where $\trs G$ is the transpose of $G$.  
If $A_1, \dots, A_r$ are square matrices, then $\diag[A_1, \dots, A_r]$ denotes the matrix with $A_1, \dots, A_r$ in the diagonal blocks and $0$ in all other blocks.
Let $\ono_m$ be the identity matrix of degree $m$. 
Put 
\begin{align*}
Sp_g(R)&=\left\{G\in\GL_{2g}(R)\;\biggl|\;G\begin{pmatrix} 0 & \ono_g \\ -\ono_g & 0\end{pmatrix}\trs G=\begin{pmatrix} 0 & \ono_g \\ -\ono_g & 0\end{pmatrix}\right\}, \\
M_g(R)&=\left\{\bfm(A)=\begin{pmatrix} A & 0 \\ 0 & \trs A^{-1}\end{pmatrix}\;\biggl|\;A\in\GL_g(R)\right\}, \\
N_g(R)&=\left\{\bfn(B)=\begin{pmatrix} \ono_g & B \\ 0 & \ono_g\end{pmatrix}\;\biggl|\;B\in\Sym_g(R)\right\}. 
\end{align*} 

Let $\ZZ$ be the set of integers and $\mu_n$ the group of $n$-th roots of unity. 
If $x$ is a real number, then we put $[x]=\max\{m\in\ZZ\;|\;m\leq x\}$. 



\section{Eisenstein series}\label{sec:2}

Let $k$ be a totally real number field with integer ring $\frko$. 
The set of real places of $k$ is denoted by $\frkS_\infty$. 
The completion of $k$ at a place $v$ is denoted by $k_v$. 
Let $(\;,\;)_{k_v}:k_v^\times\times k_v^\times\to\mu_2$ denote the Hilbert symbol.
We let $\frkp$ denote a finite prime of $k$ and do not use the letter $\frkp$ for a real place. 
Let $q_\frkp=\sharp\frko/\frkp$ be the order of the residue field.  
We define the character $\bfe_\frkp$ of $k_\frkp$ by $\bfe_\frkp(x)=\bfe(-y)$ with $y\in\QQ^{(p)}$ such that $\Tr_{k_\frkp/\QQ_p}(x)-y\in\ZZ_p$ if $p$ is the rational prime divisible by $\frkp$. 
Put $\bfe(z)=e^{2\pi\iu z}$ for $z\in\CC$ and $\bfe_\infty(z)=\prod_{v\in\frkS_\infty}\bfe(z_v)$ for $z\in\prod_{v\in\frkS_\infty}\CC$.  

Once and for all we fix a positive integer $g\geq 2$. 
Let $(V,(\;,\;))$ be a quadratic space of dimension $m$ over $k_v$. 
Whenever we speak of a quadratic space, we always assume that $(\;,\;)$ is nondegenerate, i.e., $(u,V)=0$ implies that $u=0$.  
Put $s_0=\frac{1}{2}(m-g-1)$. 
Given $u=(u_1,\dots,u_g)\in V^g$, we write $(u,u)$ for the $g\times g$ symmetric matrix with $(i,j)$ entry equal to $(u_i,u_j)$. 
We write $\det V$ for the element in $k_v^\times/k_v^{\times2}$ represented by the determinant of the matrix representation of the bilinear form $(\;,\;)$ with respect to any basis for $V$ over $k_v$. 
We define the character $\chi^V:k_v^\times\to\mu_2$ by 
\beq
\chi^V(t)=(t,(-1)^{m(m-1)/2}\det V)_{k_v}. \label{tag:20}
\eeq  

We normalize our Hasse invariant $\eta^V$ so that it depends only on the isomorphism class of an anisotropic kernel of $V$ (cf. \cite{E1,Sh6}). 

\begin{definition}\label{def:21}
We associate to the quadratic space $V$ over $k_\frkp$ of dimension $m$ an invariant $\eta^V\in\mu_2$ according to the type of $V$ as follows:  
\begin{itemize}
\item If $m$ is odd, then an anisotropic kernel of $V$ has dimension $2-\eta^V$. 
\item If $m$ is even and $\chi^V\neq 1$ and if we choose an element $c\in k_\frkp^\times$ such that $\chi^V(c)=\eta^V$, then $V$ is the orthogonal sum of a split form of dimension $m-2$ with the norm form scaled by the factor $c$ on the quadratic extension of $k_\frkp$ corresponding to $\chi^V$. 
\item If $m$ is even and $\chi^V=1$, then $V$ is split or the orthogonal sum of the norm form on the quaternion algebra over $k_\frkp$ with a split form of dimension $m-4$ according as $\eta^V=1$ or $-1$. 
\end{itemize}
\end{definition}

We denote the set of positive definite symmetric matrices over $\RR$ of rank $g$ by $\Sym_g(\RR)^+$. 
Let 
\[\frkH_g=\{X+\iu Y\in\Sym_g(\CC)\;|\;Y\in\Sym_g(\RR)^+\}\] 
be the Siegel upper half-space of genus $g$. 
The real symplectic group $Sp_g(\RR)$ acts transitively on $\frkH_g$ by $GZ=(AZ+B)(CZ+D)^{-1}$ for $Z\in\frkH_g$ and $G=\begin{pmatrix} A & B \\ C & D \end{pmatrix}\in Sp_g(\RR)$. 
We define the maximal compact subgroups by   
\begin{align*}
K_\frkp&=Sp_g(\frko_\frkp), & 
K_v&=\{G\in Sp_g(k_v)\;|\;G(\iu\ono_g)=\iu\ono_g\} 
\end{align*} 
for $v\in\frkS_\infty$. 
We have the Iwasawa decomposition 
\[Sp_g(k_v)=M_g(k_v)N_g(k_v)K_v. \]
Denote the two-fold metaplectic cover of $Sp_g(k_v)$ by $\Mp_v$. 
There is a canonical splitting $N_g(k_v)\to\Mp_v$. 
When $\frkp$ does not divide $2$, we have a canonical splitting $K_\frkp\to\Mp_\frkp$. 
We still use $N_g(k_v)$ and $K_\frkp$ to denote the images of these splittings. 
Let $\til K_v$ denote the pull-back of $K_v$ in $\Mp_v$. 
Define the map $\Mp_v\to\RR^\times_+$ by writing $\til G=\bfn(b)\til m\til k\in\Mp_v$ with $b\in\Sym_g(k_v)$, $a\in\GL_g(k_v)$, $\til m=(\bfm(a),\zet)$ and $\til k\in\til K_v$ and setting $|a(\til G)|=|\det a|_v$. 
We refer to Section 1.1 of \cite{Y1} for additional explanation. 

Let $V$ be a quadratic space over $k_v$ and $\ome_v$ the Weil representation of $\Mp_v$ with respect to $\bfe_v$ on the space $\cals(V^g)$ of the Schwartz functions on $V^g$. 
We associate to $\vph\in\cals(V^g)$ the function on $\Mp_v\times\CC$ by 
\[f_\vph^{(s)}(\til G)=(\ome_v(\til G)\vph)(0)|a(\til G)|^{s-s_0}. \]

The real metaplectic group acts on the half-space $\frkH_g$ through $Sp_g(\RR)$. 
There is a unique factor of automorphy $\jmath_v:\Mp_v\times\frkH_g\to\CC^\times$ whose square descends to the automorphy factor on $Sp(k_v)\times\frkH_g$ given by $\jmath_v(G_v,Z_v)^2=\det(C_vZ_v+D_v)$ for $G_v=\begin{pmatrix} * & * \\ C_v & D_v \end{pmatrix}\in Sp(k_v)$. 
We define an automorphy factor $\jmath:\prod_{v\in\frkS_\infty}(\Mp_v\times\frkH_g)\to\CC^\times$ by $\jmath(\til G,Z)=\prod_v\jmath_v(\til G_v,Z_v)$. 

Let $\AA$ be the adele ring of $k$ and $\AAf$ the finite part of the adele ring. 
We arbitrarily fix a quadratic character $\chi$ of $\AA^\times/k^\times$ such that $\chi_v=\sgn^{m(m-1)/2}$. 

\begin{definition}\label{def:22}
Let $\calc=\{\calc_v\}$ be a collection of local quadratic spaces of dimension $m$ such that $\chi^{\calc_v}=\chi_v$ for all $v$, such that $\calc_v$ is positive definite for $v\in\frkS_\infty$ and such that $\eta^{\calc_\frkp}=1$ for almost all $\frkp$. 
We say that $\calc$ is coherent if it is the set of localizations of a global quadratic space. 
Otherwise we call $\calc$ incoherent. 
\end{definition}

One can derive the following criterion from the theorem of Minkowski-Hasse (see Theorem 4.4 of \cite{Sh5}). 

\begin{lemma}\label{lem:21}
Put $d=[k:\QQ]$. When $m$ is odd, $\calc$ is coherent if and only if $(-1)^{d(m^2-1)/8}\prod_\frkp\eta^{\calc_\frkp}=1$. 
When $m$ is even, $\calc$ is coherent if and only if $(-1)^{dm(m-2)/8}\prod_\frkp\eta^{\calc_\frkp}=1$. 
\end{lemma}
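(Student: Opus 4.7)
The plan is to derive the criterion from the Hasse--Minkowski theorem in its existence form: a family $(\calc_v)_v$ of local quadratic spaces of dimension $m$ arises from a global quadratic space over $k$ if and only if (i) the local discriminant classes in $k_v^\times/(k_v^\times)^2$ come from a single global class; (ii) the archimedean signatures are mutually consistent; and (iii) the classical Hasse--Witt invariants $\epsilon(\calc_v)$ satisfy the product formula $\prod_v\epsilon(\calc_v)=1$. Under the hypotheses of Definition~\ref{def:22}, condition (i) is enforced by the fixed global character $\chi$, and (ii) by positive definiteness at each of the $d=[k:\QQ]$ real places; so coherence is equivalent to (iii).

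The remainder of the argument is a conversion of (iii) into the stated form. At each finite $\frkp$, an inspection of the classification recalled in Definition~\ref{def:21}, starting from a suitable diagonalization of $\calc_\frkp$, yields an explicit identity
\[
\eta^{\calc_\frkp}\;=\;\epsilon(\calc_\frkp)\cdot\beta_\frkp,
\]
where $\beta_\frkp$ is a product of local Hilbert symbols whose arguments are global elements of $k^\times$ (namely $-1$, a global representative of the discriminant class determined by $\chi$, and, in the even case with $\chi^{\calc_\frkp}\neq 1$, a global lift of the scalar $c$ from Definition~\ref{def:21}). Because the arguments are global, Hilbert reciprocity applies to $\prod_v\beta_v=1$, and combined with the product formula $\prod_v\epsilon(\calc_v)=1$ it reduces the computation of $\prod_\frkp\eta^{\calc_\frkp}$ to a purely archimedean contribution $\prod_{v\in\frkS_\infty}\beta_v^{-1}$.

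At each real place $v$, the positive definite form $\calc_v\cong\langle 1,\dots,1\rangle$ satisfies $\epsilon(\calc_v)=1$, so only the $\beta_v$-factor contributes. A direct evaluation using $(-1,-1)_\RR=-1$ together with the explicit exponents appearing in $\beta_v$ gives, per real place, a sign $(-1)^{(m^2-1)/8}$ when $m$ is odd and $(-1)^{m(m-2)/8}$ when $m$ is even. Raising to the $d$-th power for the $d$ real places of $k$ then produces the claimed equalities, and the lemma follows.

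The main obstacle is the case-by-case identification of $\beta_\frkp$. Particularly in the even-$m$ ramified case, $\eta^{\calc_\frkp}$ is defined via an auxiliary scalar $c\in k_\frkp^\times$ satisfying $\chi^{\calc_\frkp}(c)=\eta^{\calc_\frkp}$, and one must verify that the dependence on these local choices of $c$ cancels in the global product $\prod_\frkp\beta_\frkp$ (equivalently, that a compatible global lift of $c$ exists), so that Hilbert reciprocity genuinely applies; the odd-$m$ case is cleaner because the anisotropic kernel has dimension at most three and $\eta$ is read off directly from the classification.
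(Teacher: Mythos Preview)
The paper's own ``proof'' is a one-line citation to Shimura \cite{Sh5}, Theorem 4.4; there is no argument to compare against. Your proposal is essentially an outline of how one would unpack that citation: reduce coherence to the product formula $\prod_v\epsilon(\calc_v)=1$ for the classical Hasse invariant via Hasse--Minkowski, relate $\eta^{\calc_\frkp}$ to $\epsilon(\calc_\frkp)$ by a correction factor $\beta_\frkp$ built from Hilbert symbols in global arguments, and then use Hilbert reciprocity to convert $\prod_\frkp\beta_\frkp$ into an archimedean sign. The strategy is sound and, once the computations are carried out, yields exactly the statement.

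One point deserves correction. In the even case with $\chi^{\calc_\frkp}\neq 1$ you worry that the auxiliary scalar $c$ of Definition~\ref{def:21} must be lifted globally so that Hilbert reciprocity applies to $\prod_\frkp\beta_\frkp$. In fact the dependence on $c$ disappears \emph{locally}: writing $\calc_\frkp\cong H^{(m-2)/2}\oplus\langle c,-cD\rangle$ with $D$ a global representative of the class determined by $\chi$, the only $c$-dependent Hilbert symbol arising in $\epsilon(\calc_\frkp)$ is $(c,D)_\frkp=\chi_\frkp(c)=\eta^{\calc_\frkp}$ itself. Hence the identity $\eta^{\calc_\frkp}=\epsilon(\calc_\frkp)\beta_\frkp$ holds with $\beta_\frkp$ a product of Hilbert symbols in $-1$ and $D$ alone, independent of $c$, and no global lift is required. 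With this clarification your outline goes through; what remains is the routine bookkeeping of the exponents of $(-1,-1)_v$ at the $d$ real places, which produces the exponents $(m^2-1)/8$ and $m(m-2)/8$.
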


There is a unique splitting $Sp_g(k)\hookrightarrow\Mp_g$ by which we regard $Sp_g(k)$ as the subgroup of the two-fold metaplectic cover $\Mp_g$ of $Sp_g(\AA)$. 
Let $P_g=M_gN_g$ be the Siegel parabolic subgroup of $Sp_g$. 
Given any pure tensor $\vph=\otimes_\frkp\vph_\frkp\in\otimes_\frkp'\cals(\calc_\frkp^g)$, we consider the function   
\begin{align*}
f_\vph^{(s)}(\til G)&=\prod_\frkp f_{\vph_\frkp}^{(s)}(\til G_\frkp), & 
f_{\vph_\frkp}^{(s)}(\til G_\frkp)&=(\ome_\frkp(\til G_\frkp)\vph_\frkp)(0)|a(\til G_\frkp)|^{s-s_0} 
\end{align*}
on $\Mp_g\times\CC$ and the Eisenstein series on $\prod_{v\in\frkS_\infty}\frkH_g$
\[E(Z,f_\vph^{(s)})=(\det Y)^{(s-s_0)/2}\sum_{\gam\in P_g(k)\bsl Sp_g(k)}|\jmath(\gam,Z)|^{s_0-s}\jmath(\gam,Z)^{-g}f_\vph^{(s)}(\gam), \]
where $Y$ is the imaginary part of $Z$. 
The series is absolutely convergent for $\Re s>\frac{g+1}{2}$. 
It admits a meromorphic continuation to the whole plane and its Laurent coefficients define automorphic forms. 
Moreover, it is holomorphic at $s=s_0$, and if $\calc$ is coherent, then the Siegel--Weil formula holds by \cite{KuR}. 

From now on we require that $m\leq g+1$. 
Let $V$ be a totally positive definite quadratic space of dimension $m$ over $k$. 
We normalize the invariant measure $\d h$ on $\O(V,k)\bsl\O(V,\AA)$ to have total volume $1$ and define the integral
\[I(Z,\vph)=\int_{\O(V,k)\bsl\O(V,\AA)}\Tht(Z,h;\vph)\,\d h \]
of the theta function 
\[\Tht(Z,h;\vph)=\sum_{u\in V(k)^g}\vph(h^{-1}u)\bfe_\infty(\tr((u,u)Z)). \]
Since we are under coherent situation, the Siegel--Weil formula can now be stated as follows: 
\beq
E(Z,f_\vph^{(s)})|_{s=s_0}=2I(Z,\vph). \label{tag:21}
\eeq
The reader who is interested in this identity can consult Theorem 2.2(\roman{one}) of \cite{Y1}. 
On the other hand, if $\calc$ is incoherent, then the series $E(Z,f_\vph^{(s)})$ has a zero at $s=s_0$ by Corollary 5.5 of \cite{Y1}. 

Consider the Fourier expansions
\begin{align*}
E(Z,f_\vph^{(s)})&=\sum_{T\in\Sym_g(k)}A(T,Y,\vph,s)\bfe_\infty(\tr(TZ)), \\
\frac{\partial}{\partial s}E(Z,f_\vph^{(s)})|_{s=s_0}&=\sum_{T\in\Sym_g(k)}C(T,Y,\vph)\bfe_\infty(\tr(TZ)), 
\end{align*}
where 
\begin{align*}
Z&=X+\iu Y, & 
C(T,Y,\vph)&=\frac{\partial}{\partial s}A(T,Y,\vph,s)|_{s=s_0}. 
\end{align*}
Put $\Sym_g^\nd=\Sym_g(k)\cap\GL_g(k)$.  
When $T\in\Sym_g^\nd$, the Fourier coefficient has an explicit expression as an infinite product
\[A(T,Y,\vph,s)=a(T,Y,s)\prod_\frkp W_T\Big(f_{\vph_\frkp}^{(s)}\Big) \]
for $\Re s\gg 0$, where 
\[W_T\Big(f_{\vph_\frkp}^{(s)}\Big)=\int_{\Sym_g(k_\frkp)}f_{\vph_\frkp}^{(s)}\left(\begin{pmatrix} 0 & \ono_g \\ -\ono_g & 0\end{pmatrix}\bfn(z_\frkp)\right)\overline{\bfe_\frkp(\tr(Tz_\frkp))}\,\d z_\frkp \]
and $a(T,Y,s)\bfe_\infty(\iu\tr(TY))$ is a product of the confluent hypergeometric functions investigated in \cite{Sh1}. 
Given $T\in\Sym_g^\nd$, we define the quadratic form on $V^T=k^g$ by $u\mapsto T[u]$ and define the Hecke character $\chi^T=\prod_v\chi^T_v$ and the Hasse invariants $\eta^T_\frkp$, where $\chi^T_v$ is defined in (\ref{tag:20}). 
Let $\Dif(T,\calc)$ denote the set of places $v$ of $k$ such that $T$ is not represented by $\calc_v$. 
Let $\Sym_g^+$ denote the set of totally positive definite symmetric $g\times g$ matrices over $k$. 

\begin{lemma}\label{lem:22}
Let $\vph_\frkp\in\cals(\calc_\frkp^g)$ and $T\in\Sym_g^\nd$. 
\begin{enumerate}
\renewcommand\labelenumi{(\theenumi)}
\item $a(T,Y,s)$ and $W_T\Big(f_{\vph_\frkp}^{(s)}\Big)$ are entire functions in $s$. 
\item\label{lem:221} $\lim_{s\to s_0}W_T\Big(f_{\vph_\frkp}^{(s)}\Big)=0$ unless $T$ is represented by $\calc_\frkp$. 
\item\label{lem:223} If $m=g$, $T\in\Sym_g^+$, $\chi^T=\chi$ and $\calc$ is incoherent, then $\Dif(T,\calc)$ is a finite set of odd cardinality. 
\end{enumerate}
\end{lemma}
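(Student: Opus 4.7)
The plan is to treat the three assertions in turn, with (3) being the main structural point. For (1), entirety of $a(T,Y,s)$ is classical and follows from Shimura's theory of confluent hypergeometric functions as developed in \cite{Sh1}. For the local Whittaker integral $W_T\bigl(f_{\vph_\frkp}^{(s)}\bigr)$, the integrand becomes compactly supported for $\Re s\gg 0$, and after unfolding the definition against the Weil representation one writes $W_T\bigl(f_{\vph_\frkp}^{(s)}\bigr)$ as a finite $\CC$-linear combination of local Siegel series attached to $T$, each of which extends to an entire function of $s$; no poles arise because $\vph_\frkp\in\cals(\calc_\frkp^g)$ is Schwartz.

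For (2), the plan is to invoke the known evaluation of $W_T\bigl(f_{\vph_\frkp}^{(s)}\bigr)$ at the Siegel--Weil point $s_0=\frac{m-g-1}{2}$. At this value, standard local arguments underlying the Siegel--Weil formula \cite{KuR} identify $W_T\bigl(f_{\vph_\frkp}^{(s_0)}\bigr)$, up to a nonzero explicit constant, with the orbital integral
\[
\int_{\{u\in\calc_\frkp^g\,:\,(u,u)=2T\}}\vph_\frkp(u)\,\d\mu_T(u),
\]
where $\d\mu_T$ denotes a suitable measure on the fibre. When $T$ is not represented by $\calc_\frkp$, the fibre is empty and the orbital integral vanishes identically, forcing $\lim_{s\to s_0}W_T\bigl(f_{\vph_\frkp}^{(s)}\bigr)=0$.

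For (3), the key step is to identify $\Dif(T,\calc)$ with the set of primes at which $V^T_\frkp$ and $\calc_\frkp$ fail to be isomorphic as quadratic spaces. Since $T\in\Sym_g^+$ is nondegenerate and $\dim V^T=g=m=\dim\calc_\frkp$, every representation of $T$ by $\calc_\frkp$ is forced to be an isometric isomorphism $V^T_\frkp\cong\calc_\frkp$, and conversely. The hypothesis $\chi^T=\chi$ gives $\chi^{V^T_\frkp}=\chi^{\calc_\frkp}$ at every place, so the only remaining obstruction is the Hasse invariant: $\frkp\in\Dif(T,\calc)$ if and only if $\eta^{V^T_\frkp}\neq\eta^{\calc_\frkp}$. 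At each archimedean place $v$, both $V^T_v$ and $\calc_v$ are positive definite of dimension $m$ with matching quadratic character, hence are isomorphic and contribute nothing to $\Dif(T,\calc)$. Applying Lemma \ref{lem:21} to the coherent global space $V^T$ yields $(-1)^{dm(m-2)/8}\prod_\frkp\eta^{V^T_\frkp}=1$, while incoherence of $\calc$ yields $(-1)^{dm(m-2)/8}\prod_\frkp\eta^{\calc_\frkp}=-1$. Taking the ratio one obtains $\prod_\frkp\eta^{V^T_\frkp}\eta^{\calc_\frkp}=-1$, so the cardinality of $\Dif(T,\calc)$ is odd; finiteness is automatic since $\eta=1$ almost everywhere in both products.

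The main obstacle is the precise local identification in (2): one must keep careful track of the Weil-representation normalizations to confirm the orbital integral interpretation at $s=s_0$, particularly at primes $\frkp\mid 2$ where the canonical splitting $K_\frkp\to\Mp_\frkp$ is not available. This is well-documented in the Siegel--Weil literature and in the authors' earlier paper \cite{Y1}, but it is the one step in the proof where an apparently routine statement contains genuine content.
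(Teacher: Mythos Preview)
Your argument is correct and, for parts (1) and (3), essentially coincides with the paper's: the paper simply cites \cite{Kar1,Sh1} for (1), and for (3) observes that $\Dif(T,\calc)=\{\frkp\;:\;\eta^{\calc_\frkp}=-\eta^T_\frkp\}$ and then compares the two products of Hasse invariants via Lemma~\ref{lem:21}, exactly as you do. One small slip: you have written only the even-$m$ sign in Lemma~\ref{lem:21}, whereas in this section $m=g\geq 2$ need not be even; the identical argument with $(-1)^{d(m^2-1)/8}$ covers the odd case, and in any event the sign cancels in the ratio $\prod_\frkp\eta^{V^T_\frkp}\eta^{\calc_\frkp}$. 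Also, with the paper's conventions the fibre in your orbital integral should be $\{u:(u,u)=T\}$ rather than $2T$, but this is immaterial to the vanishing statement.

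For part (2) you take a genuinely different route. The paper invokes the lemma on p.~73 of \cite{MVW}, a representation-theoretic statement that the twisted Jacquet module of the Weil representation with respect to the character $z\mapsto\psi(\tr(Tz))$ is supported on the orbit $\{u:(u,u)=T\}$; this forces the Whittaker functional at $s=s_0$ to vanish whenever that orbit is empty, without ever identifying the value explicitly. Your approach instead appeals to the local Siegel--Weil identity, matching $W_T\bigl(f_{\vph_\frkp}^{(s_0)}\bigr)$ with a concrete orbital integral. Both are valid; the MVW route is shorter and sidesteps the normalization issues you flag in your final paragraph, while your approach is more constructive and yields the actual value (not just its vanishing) when the orbit is nonempty---information that is in fact used later in the paper via Proposition~\ref{prop:31}.
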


\begin{proof}
The first part is well-known (see \cite{Kar1,Sh1}). 
Lemma on p. 73 of \cite{MVW} implies (\ref{lem:221}). 
By assumption $\Dif(T,\calc)=\{\frkp\;|\;\eta^{\calc_\frkp}=-\eta^T_\frkp\}$. 
Since $\calc$ is incoherent, Lemma \ref{lem:21} implies $\prod_\frkp\eta^{\calc_\frkp}=-\prod_\frkp\eta^T_\frkp$, which proves (\ref{lem:223}).  
\end{proof}

Let $T\in\Sym_g^+$. 
Then both $a(T,Y,s_0)$ and $C(T,Y,\vph)$ are independent of $Y$. 
Put 
\begin{align*}
c_m(T)&=a(T,Y,s_0), &  
C(T,\vph)&=C(T,Y,\vph), &
D_T&=\mathrm{N}_{k/\QQ}(\det(2T)). 
\end{align*}
Let $\frkd_k$ denote the absolute value of the discriminant of $k$. 
Note that  
\begin{align}
c_g(T)&=c_gD_T^{-1/2}, & 
c_g&=\frkd_k^{-g(g+1)/4}\biggl(\bfe\biggl(\frac{g^2}{8}\biggl)\frac{2^g\pi^{g^2/2}}{\Gam_g\bigl(\frac{g}{2}\bigl)}\biggl)^d \label{tag:22}
\end{align}
by (4.34K) of \cite{Sh1}, where $\Gam_g(s)=\pi^{g(g-1)/4}\prod_{i=0}^{g-1}\Gam\bigl(s-\frac{i}{2}\bigl)$.

\begin{proposition}\label{prop:21}
Let $m=g$ and $T\in\Sym_g^+$. 
Suppose that $\calc$ is incoherent. 
If $\chi^T=\chi$, then $C(T,\vph)=0$ unless $\Dif(T,\calc)$ is a singleton. 
Moreover, if $\Dif(T,\calc)=\{\frkp\}$, then 
\[C(T,\vph)=c_gD_T^{-1/2}\lim_{s\to -1/2}\frac{\partial W_T\Big(f_{\vph_\frkp}^{(s)}\Big)}{\partial s}\prod_{\frkl\neq\frkp}W_T\Big(f_{\vph_\frkl}^{(s)}\Big). \]
\end{proposition}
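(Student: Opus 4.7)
The plan is to differentiate the Euler product expansion
\[A(T,Y,\vph,s)=a(T,Y,s)\prod_\frkp W_T\Big(f_{\vph_\frkp}^{(s)}\Big)\]
at $s=s_0=-\tfrac12$ and to track the order of vanishing of each local factor at that point. By Lemma \ref{lem:22}(\ref{lem:221}), the local Whittaker factor $W_T\big(f_{\vph_\frkp}^{(s_0)}\big)$ vanishes whenever $T$ is not represented by $\calc_\frkp$, i.e.\ precisely when $\frkp\in\Dif(T,\calc)$. Therefore the global product $\prod_\frkp W_T\big(f_{\vph_\frkp}^{(s)}\big)$ vanishes at $s_0$ to order at least $\sharp\Dif(T,\calc)$. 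Since the hypothesis $\chi^T=\chi$ together with the incoherence of $\calc$ places us in the situation of Lemma \ref{lem:22}(\ref{lem:223}), so that $\sharp\Dif(T,\calc)$ is a positive odd integer, the product vanishes to order $\geq 1$ in any case and to order $\geq 3$ as soon as $\sharp\Dif(T,\calc)\geq 3$.

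First I would split the derivative into
\[C(T,Y,\vph)=\frac{\partial a}{\partial s}(T,Y,s_0)\prod_\frkp W_T\Big(f_{\vph_\frkp}^{(s_0)}\Big)+a(T,Y,s_0)\cdot\frac{\partial}{\partial s}\prod_\frkp W_T\Big(f_{\vph_\frkp}^{(s)}\Big)\Big|_{s=s_0}.\]
The first summand contains the full product and so vanishes because $\Dif(T,\calc)$ is nonempty. For the second summand I expand the derivative of the product via the Leibniz rule, obtaining
\[\sum_\frkq\frac{\partial W_T\big(f_{\vph_\frkq}^{(s)}\big)}{\partial s}\Big|_{s=s_0}\prod_{\frkl\neq\frkq}W_T\Big(f_{\vph_\frkl}^{(s_0)}\Big).\]
A term with $\frkq\notin\Dif(T,\calc)$ still contains all $\sharp\Dif(T,\calc)\geq 1$ vanishing factors in the remaining product, hence is zero; a term with $\frkq\in\Dif(T,\calc)$ contains $\sharp\Dif(T,\calc)-1$ vanishing factors and therefore survives only when $\sharp\Dif(T,\calc)=1$. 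This immediately gives the first assertion, namely $C(T,\vph)=0$ whenever $\sharp\Dif(T,\calc)\geq 3$.

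When $\Dif(T,\calc)=\{\frkp\}$, exactly one term in the Leibniz expansion is nonzero, and the formula reduces to
\[C(T,Y,\vph)=a(T,Y,s_0)\cdot\lim_{s\to s_0}\frac{\partial W_T\big(f_{\vph_\frkp}^{(s)}\big)}{\partial s}\prod_{\frkl\neq\frkp}W_T\Big(f_{\vph_\frkl}^{(s)}\Big),\]
where the limit formulation absorbs the vanishing of the $\frkp$-factor and makes clear that the expression is well defined. Substituting the closed form $a(T,Y,s_0)=c_g(T)=c_gD_T^{-1/2}$ from (\ref{tag:22}) yields the stated identity and confirms that $C(T,\vph)$ is independent of $Y$, as promised in the paragraph preceding the proposition.

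The argument is essentially a routine application of the product rule; the only subtlety is keeping the counting of vanishing orders clean and invoking Lemma \ref{lem:22}(\ref{lem:223}) to discard the case $\sharp\Dif(T,\calc)=0$. I expect no real obstacle beyond ensuring that the Whittaker factors and $a(T,Y,s)$ are all holomorphic at $s_0$, which is precisely the content of Lemma \ref{lem:22}(1) and legitimizes both the termwise differentiation and the passage to the limit.
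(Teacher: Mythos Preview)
Your approach is the same as the paper's in spirit---differentiate the product expansion and use Lemma \ref{lem:22} to count zeros---but you gloss over a convergence issue that the paper handles explicitly. The Euler product
\[A(T,Y,\vph,s)=a(T,Y,s)\prod_\frkp W_T\Big(f_{\vph_\frkp}^{(s)}\Big)\]
is asserted in the paper only for $\Re s\gg 0$; at $s_0=-\tfrac12$ the left side is defined by analytic continuation, and your termwise Leibniz expansion (an infinite sum, each summand containing an infinite product evaluated at $s_0$) is not a priori meaningful there. Lemma \ref{lem:22}(1) only gives holomorphy of each individual local factor, not convergence of the infinite product near $s_0$, so your final remark does not actually close this gap.

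The paper's remedy is to fix a finite set $\frkS$ of bad primes (necessarily containing $\Dif(T,\calc)$) and rewrite
\[A(T,Y,\vph,s)=\bet^T(s)\,a(T,Y,s)\prod_{\frkq\in\frkS}\bet^T_\frkq(s)\,W_T\Big(f_{\vph_\frkq}^{(s)}\Big),\]
where $\bet^T(s)$ is an explicit ratio of global $L$-functions that packages all the unramified local Whittaker factors and has a known analytic continuation, and each $\bet^T_\frkq(s)$ is a local Euler factor. Under the hypothesis $\chi^T=\chi$ the global factor $\bet^T(s)$ is holomorphic and nonzero at $s_0$ and each $\bet^T_\frkq(s)$ is holomorphic there, so one can legitimately apply the Leibniz rule to this \emph{finite} product. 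The surviving term is exactly the one you identify, and the formula in the proposition follows. Once you insert this repackaging step, your argument is complete and matches the paper's.
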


\begin{proof}
For given $\vph$ and $T$, let $\frkS$ be a finite set of rational primes of $k$ such that if $\frkq\notin\frkS$, then $\frkq$ does not divide $2$, $\chi_\frkq$ is unramified, $\bfe_\frkq$ is of order $0$, $T\in\GL_g(\frko_\frkq)$ and the restriction of $f^{(s)}_{\vph_\frkq}$ to $K_\frkq$ is $1$. 
Since $T$ cannot be unimodular at $\frkp\in\Dif(T,\calc)$, the set $\frkS$ necessarily contains $\Dif(T,\calc)$. 
The $T$-th Fourier coefficient of $E(Z,f_\vph^{(s)})$ is given by 
\beq 
A(T,Y,\vph,s)=\bet^T(s)a(T,Y,s)\prod_{\frkq\in \frkS}\bet^T_\frkq(s)W_T\Big(f_{\vph_\frkq}^{(s)}\Big), \label{tag:23}
\eeq
where 
\begin{align*}
\bet^T(s)&=\frac{L\bigl(s+\frac{1}{2},\chi^T\chi\bigl)}{\prod^{[(g+1)/2]}_{j=1}\zet(2s+2j-1)}\times
\begin{cases} 
1 &\text{if $2\nmid g$, }\\
L\bigl(s+\frac{g+1}{2},\chi\bigl)^{-1} &\text{if $2|g$, }
\end{cases}\\
\bet^T_\frkq(s)&=\frac{\prod^{[(g+1)/2]}_{j=1}\zet_\frkq(2s+2j-1)}{L\bigl(s+\frac{1}{2},\chi^T_\frkq\chi_\frkq\bigl)}\times
\begin{cases} 
1 &\text{if $2\nmid g$, }\\
L\bigl(s+\frac{g+1}{2},\chi_\frkq\bigl) &\text{if $2|g$. }
\end{cases}
\end{align*}

Notice that the product $\bet^T_\frkq(s)W_T\Big(f_{\vph_\frkq}^{(s)}\Big)$ is holomorphic at $s=-\frac{1}{2}$. 
Indeed, if $\chi^T_\frkq=\chi_\frkq$, then $\bet^T_\frkq(s)$ is holomorphic at $s=-\frac{1}{2}$ while if $\chi^T_\frkq\neq\chi_\frkq$, then $\bet^T_\frkq(s)$ has a simple pole at $s=-\frac{1}{2}$, but $W_T\Big(f_{\vph_\frkq}^{(s)}\Big)$ has a zero at $s=-\frac{1}{2}$ by Lemma \ref{lem:22}(\ref{lem:221}). 

Assume that $\chi^T=\chi$. 
Then $\bet^T(s)$ is holomorphic and has no zero at $s=-\frac{1}{2}$. 
If $\frkq\in\Dif(T,\calc)$, then $\bet^T_\frkq(s)W_T\Big(f_{\vph_\frkq}^{(s)}\Big)$ has a zero at $s=-\frac{1}{2}$ by Lemma \ref{lem:22}(\ref{lem:221}), which combined with (\ref{tag:23}) proves the first statement. 
We obtain the first formula by differentiating (\ref{tag:23}) at $s=-\frac{1}{2}$. 
\end{proof}

\begin{corollary}\label{cor:21}
If $m=g$, $\calc$ is incoherent and $T\in\Sym_g^+$ with $\chi^T\neq\chi$, then 
\[C(T,\vph)=c_gD_T^{-1/2}\lim_{s\to -1/2}\frac{\partial \bet^T}{\partial s}(s)\prod_\frkp \bet^T_\frkp(s)W_T\Big(f_{\vph_\frkp}^{(s)}\Big). \]
\end{corollary}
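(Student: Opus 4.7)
My plan is to follow the same strategy as in the proof of Proposition~\ref{prop:21}, starting from the product expansion (\ref{tag:23}) and identifying which factor is responsible for the vanishing of $A(T,Y,\vph,s)$ at $s=-\tfrac{1}{2}$. In the setting of Proposition~\ref{prop:21} the zero came from a single local factor $\bet^T_\frkp(s)W_T(f^{(s)}_{\vph_\frkp})$ with $\frkp\in\Dif(T,\calc)$; here, under the assumption $\chi^T\neq\chi$, I expect the zero to come instead from the global factor $\bet^T(s)$.

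The first step is to reprove, just as in the proof of Proposition~\ref{prop:21}, that $a(T,Y,s)$ and each $\bet^T_\frkq(s)W_T(f^{(s)}_{\vph_\frkq})$ are holomorphic at $s=-\tfrac{1}{2}$; this is verbatim the argument already carried out there, using Lemma~\ref{lem:22}(1) for $a$ and the local pole--zero cancellation between $\prod_j\zet_\frkq(2s+2j-1)$ and $L(s+\tfrac{1}{2},\chi^T_\frkq\chi_\frkq)^{-1}$ inside $\bet^T_\frkq$ (the two sub-cases on $\chi^T_\frkq\chi_\frkq$ carry over unchanged). The key new input is to verify that $\bet^T(-\tfrac{1}{2})=0$. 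Because $T$ is totally positive definite, $\det T$ is positive at every real place, and hence (\ref{tag:20}) yields $\chi^T_v=\chi_v$ for all $v\in\frkS_\infty$; consequently $\chi^T\chi$ is a nontrivial quadratic Hecke character trivial at infinity and corresponds to a totally real quadratic extension $F/k$. Using $L(s,\chi^T\chi)=\zet_F(s)/\zet(s)$ together with the well-known orders of vanishing of Dedekind zeta functions at $s=0$ (namely $d-1$ for $\zet$ and $2d-1$ for $\zet_F$, where $d=[k:\QQ]$), I see that $L(0,\chi^T\chi)$ has a zero of order $d$; on the other hand, the factor $\zet(2s)$ from $j=1$ in the denominator of $\bet^T(s)$ contributes a zero of order $d-1$ at $s=-\tfrac{1}{2}$, while the remaining factors are holomorphic and nonvanishing there. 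Hence $\bet^T(s)$ vanishes to exactly first order at $s=-\tfrac{1}{2}$.

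To finish, I would apply Leibniz's rule to (\ref{tag:23}) at $s=-\tfrac{1}{2}$: since $\bet^T(-\tfrac{1}{2})=0$, only the term in which $\bet^T$ is differentiated survives, giving
\[
C(T,\vph)=\frac{\partial\bet^T}{\partial s}\bigl(-\tfrac{1}{2}\bigr)\cdot a\bigl(T,Y,-\tfrac{1}{2}\bigr)\cdot\prod_{\frkq\in\frkS}\bet^T_\frkq\bigl(-\tfrac{1}{2}\bigr)W_T\bigl(f^{(-1/2)}_{\vph_\frkq}\bigr).
\]
Evaluating $a(T,Y,-\tfrac{1}{2})=c_gD_T^{-1/2}$ via (\ref{tag:22}) and using that for $\frkp\notin\frkS$ the local product $\bet^T_\frkp(s)W_T(f^{(s)}_{\vph_\frkp})$ is identically $1$, the restricted product over $\frkS$ extends to all finite places $\frkp$, yielding the claimed formula.

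The main delicacy I expect lies in the verification that $\bet^T$ vanishes to exactly first (and not higher) order at $s=-\tfrac{1}{2}$: this requires balancing the order-$d$ zero of $L(0,\chi^T\chi)$ against the order-$(d-1)$ zero of $\zet(0)$ in the denominator, with some care about which factors carry archimedean $\Gamma$-factors and which do not. That the net cancellation leaves a simple zero is, in essence, the statement that the incoherent Eisenstein series vanishes to first order at the center of symmetry of its functional equation, so one could alternatively deduce it from Corollary~5.5 of~\cite{Y1} applied to a well-chosen test function $\vph$ for which $G(s)=a(T,Y,s)\prod_\frkp\bet^T_\frkp(s)W_T(f^{(s)}_{\vph_\frkp})$ is nonvanishing at $s=-\tfrac{1}{2}$.
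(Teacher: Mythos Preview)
Your proposal is correct and follows essentially the same approach as the paper: the paper's proof is the single sentence ``Since $\bet^T(s)$ has a zero at $s=-\tfrac{1}{2}$ if $\chi\neq\chi^T$, we can deduce Corollary~\ref{cor:21} from (\ref{tag:23}),'' and you have simply unpacked this by (i) justifying the vanishing of $\bet^T(-\tfrac{1}{2})$ via the factorization $L(s,\chi^T\chi)=\zet_F(s)/\zet(s)$ and the orders of vanishing at $s=0$, and (ii) spelling out the Leibniz differentiation of (\ref{tag:23}). Your extra care about the \emph{exact} order of the zero is more than strictly required for the stated identity (if the order were $\geq 2$ both sides would vanish), but it is correct and harmless.
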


\begin{proof}
Since $\bet^T(s)$ has a zero at $s=-\frac{1}{2}$ if $\chi\neq\chi^T$, we can deduce Corollary \ref{cor:21} from (\ref{tag:23}). 
\end{proof}


\section{Fourier coefficients of derivatives of Eisenstein series}\label{sec:3}

Let $\gam_v(t)$ be the Weil constant associated to the character of second degree $u\mapsto\bfe_v(tu^2)$, and $\vep_v(\calc_v)$ the unnormalized Hasse invariant of $\calc_v$. 
Put  
\[\gam(\calc_v)=\vep_v(\calc_v)\gam_v\left(\frac{1}{2}\right)^{m-1}\gam_v\left(\frac{1}{2}\det\calc_v\right). \]
Let $L_\frkp$ be an integral lattice of $\calc_\frkp$, i.e., a finitely generated $\frko_\frkp$-submodule of $\calc_\frkp$ which spans $\calc_\frkp$ over $k_\frkp$ and such that $(u,u)\in\frko_\frkp$ for every $u\in L_\frkp$. 
Let 
\[L_\frkp^*=\{u\in\calc_\frkp\;|\;2(u,w)\in\frko_\frkp\text{ for every }w\in L_\frkp\}\]
be its dual lattice. 
Let $\ch\La L_\frkp^g\Ra\in\cals(\calc_\frkp^g)$ be the characteristic function of $L_\frkp^g$. 
We write $S_\frkp$ for the matrix for the quadratic form on $\calc_\frkp$ with respect to a fixed basis of $L_\frkp$. 
For nondegenerate symmetric matrices $T\in\frac{1}{2}\cale_g(\frko_\frkp)$ and $S\in\frac{1}{2}\cale_m(\frko_\frkp)$ the local density of representing $T$ by $S$ is defined by  
\[\alp_\frkp(S,T)=\lim_{i\to\infty}q_\frkp^{ig((g+1)-2m)/2}A_i(S,T), \]
where  
\[A_i(S,T)=\sharp\{X\in\Mat_{m,g}(\frko/\frkp^i)\;|\;S[X]\equiv T\pmod{\frkp^i}\}. \]

\begin{proposition}[cf. \cite{K1}]\label{prop:31}
Put $\calv_r=\calc_\frkp\oplus\calh(k_\frkp)^r$, where $\calh$ is the split binary quadratic space. 
We choose an integral lattice $L_\frkp^g\oplus\Mat_{2r,g}(\frko_\frkp)$ of full rank in $\calv_r^g$.
Then 
\[\lim_{s\to r+s_0}W_T\Big(f_{\ch\La L_\frkp^g\oplus\Mat_{2r,g}(\frko_\frkp)\Ra}^{(s)}\Big)=\frac{\alp_\frkp\left(S_\frkp\perp\frac{1}{2}\begin{pmatrix} & \ono_r \\ \ono_r & \end{pmatrix} ,T\right)}{\gam(\calc_\frkp)^g\frkd_k^{-g/2}[L_\frkp^*:L_\frkp]^{g/2}}. \]
Here, $s_0$ is associated to $\calc_\frkp$.
\end{proposition}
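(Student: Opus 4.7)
\emph{Plan.} The strategy is to reduce to the Kudla formula identifying the local Whittaker integral of a characteristic-function section at the Siegel--Weil point with a normalized local representation density. The central observation is that $s = r + s_0$ is exactly the Siegel--Weil point $s_0(\calv_r) = (\dim \calv_r - g - 1)/2$ for the enlarged space $\calv_r = \calc_\frkp \oplus \calh(k_\frkp)^r$, so we are evaluating at the natural point for $\calv_r$ rather than at a shifted one; consequently $\ch\La L_\frkp^g \oplus \Mat_{2r,g}(\frko_\frkp)\Ra$ is simply the characteristic function of the standard full-rank integral lattice $M := L_\frkp^g \oplus \Mat_{2r,g}(\frko_\frkp)$ in $\calv_r^g$.

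First I would assemble the geometric data of $M$. Since the standard rank-$2r$ hyperbolic lattice (with Gram matrix $2H_r$, where $H_r := \frac{1}{2}\begin{pmatrix} & \ono_r \\ \ono_r & \end{pmatrix}$) is self-dual, the dual lattice satisfies $M^* = (L_\frkp^*)^g \oplus \Mat_{2r,g}(\frko_\frkp)$, so $[M^*:M] = [L_\frkp^*:L_\frkp]^g$, and the quadratic form on $M$ is represented by $S_\frkp \perp H_r$. Next I would expand $W_T\bigl(f^{(s)}_{\ch\La M \Ra}\bigr)$ by the Iwasawa decomposition and the explicit action of the Weil representation: $\bfn(z)$ acts on $\ch\La M^g\Ra$ by the character $\bfe_\frkp(\tr((X,X)z))$, and the Weyl element $w = \begin{pmatrix} 0 & \ono_g \\ -\ono_g & 0 \end{pmatrix}$ acts by a normalized Fourier transform whose Weil constant is $\gam(\calv_r)^g$. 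At $s = s_0(\calv_r)$ the factor $|a(\til G)|^{s - s_0(\calv_r)}$ becomes trivial, and a standard truncation--limit argument (integrating over $\frkp^{-i}\Sym_g(\frko_\frkp)$ and letting $i \to \infty$) identifies the integral with $\gam(\calv_r)^{-g}$ times $\lim_i q_\frkp^{ig((g+1) - 2\dim\calv_r)/2} A_i(S_\frkp \perp H_r, T) = \alp_\frkp(S_\frkp \perp H_r, T)$, up to measure-normalization factors producing exactly $\frkd_k^{g/2}$ and $[L_\frkp^*:L_\frkp]^{g/2}$. Multiplicativity of $\gam$ on orthogonal direct sums together with $\gam(\calh) = 1$ for the split plane then gives $\gam(\calv_r) = \gam(\calc_\frkp)$, yielding the asserted formula.

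The main obstacle will be the precise bookkeeping of measure normalizations and Weil constants: one must verify that these collapse exactly to $\gam(\calc_\frkp)^g \frkd_k^{-g/2} [L_\frkp^*:L_\frkp]^{g/2}$ in the denominator, with no residual hyperbolic correction. The argument itself is essentially the one sketched in \cite{K1}, but the key identities $\vep_v(\calh) = 1$, $\det \calh = -1$, and $\gam_v(a)\gam_v(-a) = 1$ (elementary Weil-index identities) must be unwound carefully to force all $2r$ hyperbolic contributions in $\gam(\calv_r)$ to cancel, and one must track how the self-dual Haar measures on $\Sym_g(k_\frkp)$ and on $(M \otimes k_\frkp)^g$ contribute the stated powers of $\frkd_k$ and $[L_\frkp^*:L_\frkp]$.
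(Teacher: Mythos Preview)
Your outline is correct and is essentially the standard Kudla argument; the paper's own ``proof'' is simply the one-line citation ``This result can be deduced from the proof of \cite[Lemma 8.3(2)]{Y3},'' so in fact you have supplied considerably more detail than the paper does. The key observations you identify --- that $r+s_0$ is exactly the Siegel--Weil point $s_0(\calv_r)$ for the enlarged space, that the hyperbolic lattice is self-dual so $[M^*:M]=[L_\frkp^*:L_\frkp]^g$, and that $\gam(\calv_r)=\gam(\calc_\frkp)$ because the hyperbolic plane contributes trivially to the Weil index --- are precisely the ingredients in the cited argument, and your truncation--limit description of how the Whittaker integral collapses to the local density is the standard mechanism.
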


\begin{proof}
This result can be deduced from the proof of \cite[Lemma 8.3(2)]{Y3}. 
\end{proof}

Let $\calv$ be a totally positive definite quadratic space of dimension $g$ over $k$. 
Fix an integral lattice $L$ in $\calv$. 
Put 
\begin{align*}
L_\frkp&=L\otimes_\frko\frko_\frkp, & 
\ch\La L^g\Ra&=\otimes_\frkp\ch\La L^g_\frkp\Ra. 
\end{align*}
For $h\in\O(\calv,\AA)$ we write $hL$ for the lattice defined by $(hL)_\frkp=h_\frkp L_\frkp$. 
Put 
\begin{align*}
K_L&=\{h\in\SO(\calv,\AA)\;|\; hL=L\}, & 
\SO(L)&=\{h\in\SO(\calv,k)\;|\; hL=L\}.  
\end{align*}

\begin{definition}\label{def:31}
We mean by the genus (resp. class) of $L$ the set of all lattices of the form $hL$ with $h\in\O(\calv,\AA)$ (resp. $h\in\O(\calv,k)$). 
The proper class of $L$ consists of all lattices of the form $hL$ with $h\in\SO(\calv,k)$. 
\end{definition}

We write $\Xi'(L)$ and $\Xi(L)$ for the sets of classes and proper classes in the genus of $L$, respectively. 
Define the mass of the genus of $L$ by 
\begin{align*}
\frkm'(L)&=\sum_{\scrl\in\Xi'(L)}\frac{1}{\sharp\O(\scrl)}, & 
\frkm(L)&=\sum_{\scrl\in\Xi(L)}\frac{1}{\sharp\SO(\scrl)}. 
\end{align*}

\begin{remark}
For each finite prime $\frkp$ there is $h\in\O(\calv,k_\frkp)$ with $\det h=-1$ such that $hL_\frkp=L_\frkp$. 
The genus of $L$ therefore consists of lattices $hL$ with $h\in\SO(\calv,\AA)$. 
We identify $\Xi(L)$ with double cosets for $\SO(\calv,k)\bsl \SO(\calv,\AA)/K_L$ via the map $h\mapsto hL$. 
\end{remark}

Lemma 5.6(1) of \cite{Sh3} says that 
\beq
\frkm(L)=2\frkm'(L). \label{tag:31}
\eeq
We consider the following sums of representation numbers of $T\in\Sym_g(k)$: 
\begin{align*} 
R'(L,T)&=\sum_{\scrl\in\Xi'(L)}\frac{N(\scrl,T)}{\sharp\O(\scrl)}, & 
R(L,T)&=\sum_{\scrl\in\Xi(L)}\frac{N(\scrl,T)}{\sharp\SO(\scrl)},  
\end{align*}
where $N(L,T)=\sharp\{u\in L^g\;|\;(u,u)=T\}$. 

\begin{proposition}\label{prop:32}
Notation being as above, we have 
\[2\frac{R(L,T)}{\frkm(L)}=c_gD_T^{-1/2}\lim_{s\to -1/2}\prod_\frkp W_T\Big(f_{\ch\La L_\frkp^g\Ra}^{(s)}\Big). \]
\end{proposition}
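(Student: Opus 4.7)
The plan is to apply the coherent Siegel--Weil identity $(\ref{tag:21})$ to the collection $\{\calv\otimes_k k_v\}$ with test function $\vph = \ch\La L^g\Ra$, and to compare the $T$-th Fourier coefficients on the two sides.

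The Euler product for $A(T,Y,\vph,s)$ recalled just before Lemma \ref{lem:22}, combined with the closed form $a(T,Y,-\tfrac{1}{2}) = c_g D_T^{-1/2}$ given in $(\ref{tag:22})$, yields
\[
A(T,Y,\vph,-\tfrac{1}{2}) = c_g D_T^{-1/2}\prod_\frkp W_T\bigl(f_{\ch\La L_\frkp^g\Ra}^{(-1/2)}\bigl),
\]
which is already the right-hand side of the proposition. In view of $(\ref{tag:21})$, it therefore remains to identify the $T$-th Fourier coefficient of $2I(Z,\vph)$ with $2R(L,T)/\frkm(L)$.

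To compute the $T$-th Fourier coefficient of $I(Z,\vph)$, I would use that $\calv$ is totally positive definite, so $\O(\calv,k_\infty)$ is compact. Setting $K_L^{\O} := \{h\in\O(\calv,\AA)\mid hL=L\}$, the theta function $\Tht(Z,h;\vph)$ is right invariant under $K_L^{\O}$, and the finite identification $\O(\calv,k)\bsl\O(\calv,\AA)/K_L^{\O} = \Xi'(L)$ provides representatives $h_\scrl$ with $h_\scrl L = \scrl$ for each $[\scrl]\in\Xi'(L)$. With the normalization $\vol(\O(\calv,k)\bsl\O(\calv,\AA)) = 1$, the double coset indexed by $[\scrl]$ has volume $(\frkm'(L)\sharp\O(\scrl))^{-1}$, and $\Tht(Z,h_\scrl;\vph)$ becomes the ordinary theta series $\sum_{u\in\scrl^g}\bfe_\infty(\tr((u,u)Z))$, whose $T$-th Fourier coefficient is $N(\scrl,T)$. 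Summing yields that the $T$-th Fourier coefficient of $I(Z,\vph)$ equals $R'(L,T)/\frkm'(L)$.

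Finally, $(\ref{tag:31})$ gives $\frkm(L) = 2\frkm'(L)$, and the analogous orbit analysis (distinguishing whether each $\O$-class in the genus of $L$ does or does not split into two $\SO$-classes) shows $R(L,T) = 2R'(L,T)$, so $R'(L,T)/\frkm'(L) = R(L,T)/\frkm(L)$. Combined with the factor of $2$ in $(\ref{tag:21})$, this proves the proposition. The main subtlety is the careful bookkeeping of measure normalizations in the double coset decomposition, which is the classical derivation of Siegel's mass formula for Fourier coefficients.
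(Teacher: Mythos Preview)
Your argument is correct and follows the same blueprint as the paper: apply the Siegel--Weil identity (\ref{tag:21}), unfold the theta integral via a double coset decomposition, and read off the $T$-th Fourier coefficient. The only difference is the group you decompose with. The paper first passes from $\O$ to $\SO$ at the level of the integral---using that every $L_\frkp$ admits an improper isometry, so that $I(Z,\ch\La L^g\Ra)=\tfrac{1}{2}\int_{\SO(\calv,k)\bsl\SO(\calv,\AA)}\Tht(Z,h;\ch\La L^g\Ra)\,\d h$---and then decomposes into $\SO$-double cosets to obtain $R(L,T)/\frkm(L)$ directly. You instead decompose into $\O$-double cosets, land on $R'(L,T)/\frkm'(L)$, and convert at the end via $R(L,T)=2R'(L,T)$. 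That identity is what the paper records as (\ref{tag:32}) \emph{after} the proof, deriving it by comparing the two unfoldings; you invoke it as an input, justified by the class-splitting analysis. Your orbit analysis is correct (a class with an improper automorphism contributes once with $\sharp\O=2\sharp\SO$, a class without one splits into two $\SO$-classes each contributing equally), so nothing is circular---but the paper's ordering is slightly cleaner in that it avoids needing this lemma separately.
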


\begin{proof}
This equality is nothing but the Siegel formula. 
Nevertheless we reproduce its proof here because of its importance for us. 
Since both sides are zero unless $V^T\simeq\calv$ by Lemma \ref{lem:22}(\ref{lem:221}), we may identify $V^T$ with $\calv$.
As is well-known, there exists $h\in\O(V^T,k_\frkp)$ such that $hL_\frkp=L_\frkp$ and $\det h=-1$. 
Since $\SO(V^T,\AA)\bsl\O(V^T,\AA)=\mu_2(\AA)$, we have 
\begin{align*}
I(Z,\ch\La L^g\Ra)&=\frac{1}{2}\int_{\SO(V^T,k)\bsl\SO(V^T,\AA)}\Tht(Z,h;\ch\La L^g\Ra)\,\d h.  
\end{align*}
Choose a finite set of double coset representatives $h_i\in \SO(V^T,\AAf)$ so that 
\[\SO(V^T,\AA)=\bigsqcup_i\SO(V^T,k)h_iK_L. \]
Then 
\begin{align*}
I(Z,\ch\La L^g\Ra)=\frac{1}{2}\vol(K_L)\sum_i\frac{\Tht(Z,h_i;\ch\La L^g\Ra)}{\sharp\SO(h_iL)}. 
\end{align*}
Since $\frkm(L)=2\vol(K_L)^{-1}$, the $T$-th Fourier coefficient of $I(Z,\ch\La L^g\Ra)$ is equal to $\frac{R(L,T)}{\frkm(L)}$. 
The Siegel--Weil formula (\ref{tag:21}) proves the declared identity. 
\end{proof}

An examination of the proof of Proposition \ref{prop:32} confirms that 
\beq
\frac{R(L,T)}{\frkm(L)}=\frac{R'(L,T)}{\frkm'(L)}. \label{tag:32}
\eeq

We can prove the following result by combining Propositions \ref{prop:21} and \ref{prop:32}. 

\begin{proposition}\label{prop:33}
We assume that $\Dif(T,\calc)=\{\frkp\}$, notation and assumption being as in Proposition \ref{prop:21}.  
Take an integral lattice $L$ in $V^T$ such that 
\[\lim_{s=-1/2}W_T\Big(f_{\ch\La L^g_\frkp\Ra}^{(s)}\Big)\neq 0. \]
If $\vph^{}_\frkl=\ch\La L_\frkl^g\Ra$ for every prime ideal $\frkl$ distinct from $\frkp$, then 
\[C(T,\vph)=2\frac{R(L,T)}{\frkm(L)}\lim_{s\to -1/2}W_T\Big(f_{\ch\La L^g_\frkp\Ra}^{(s)}\Big)^{-1}\frac{\partial W_T\Big(f_{\vph_\frkp}^{(s)}\Big)}{\partial s}. \]
\end{proposition}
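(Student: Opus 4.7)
The proof plan is to combine Propositions \ref{prop:21} and \ref{prop:32} directly. Starting from Proposition \ref{prop:21}, which applies because $\Dif(T,\calc)=\{\frkp\}$ and $\chi^T=\chi$, and substituting the hypothesis $\vph_\frkl=\ch\La L_\frkl^g\Ra$ for every $\frkl\neq\frkp$, one obtains
\[C(T,\vph)=c_gD_T^{-1/2}\lim_{s\to -1/2}\frac{\partial W_T\bigl(f_{\vph_\frkp}^{(s)}\bigr)}{\partial s}\prod_{\frkl\neq\frkp}W_T\bigl(f_{\ch\La L_\frkl^g\Ra}^{(s)}\bigr).\]
The strategy will be to recognize the missing $\frkp$-factor and complete the product so that Proposition \ref{prop:32} can be applied.

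Next I would invoke Proposition \ref{prop:32} for the coherent global quadratic space $\calv:=V^T$ equipped with the given lattice $L$. This is legitimate: $V^T$ is totally positive definite of dimension $g$ because $T\in\Sym_g^+$, and the hypotheses $\Dif(T,\calc)=\{\frkp\}$ and $\chi^T=\chi$ together force $V^T_\frkl\simeq\calc_\frkl$ at every archimedean place and at every finite place $\frkl\neq\frkp$. Consequently $L_\frkl$ is an integral lattice of $\calc_\frkl$ for such $\frkl$, and the local Whittaker integrals appearing in Propositions \ref{prop:21} and \ref{prop:32} literally coincide away from $\frkp$. Proposition \ref{prop:32} then supplies
\[c_gD_T^{-1/2}\lim_{s\to -1/2}\prod_\frkl W_T\bigl(f_{\ch\La L_\frkl^g\Ra}^{(s)}\bigr)=2\frac{R(L,T)}{\frkm(L)}.\]

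To bridge the two displays, I would multiply and divide the right-hand side of the first by $W_T(f_{\ch\La L_\frkp^g\Ra}^{(s)})$, which by hypothesis has a nonzero finite limit at $s=-1/2$. The quotient
\[W_T\bigl(f_{\ch\La L_\frkp^g\Ra}^{(s)}\bigr)^{-1}\frac{\partial W_T\bigl(f_{\vph_\frkp}^{(s)}\bigr)}{\partial s}\]
is then a meromorphic function of $s$ that admits a finite limit, while the remaining factor $c_gD_T^{-1/2}\prod_\frkl W_T(f_{\ch\La L_\frkl^g\Ra}^{(s)})$ tends to $2R(L,T)/\frkm(L)$ by the identity just quoted. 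Since each factor possesses a finite limit at $s=-1/2$, splitting the limit of the product into the product of the limits is justified and the claimed formula drops out.

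The only step that requires genuine care is the verification that $V^T_\frkl\simeq\calc_\frkl$ at every place $\frkl\neq\frkp$, so that the local Whittaker integrals coming from the incoherent setup of Proposition \ref{prop:21} and the coherent setup of Proposition \ref{prop:32} really do match. Once this local identification is in place, the proof is a formal manipulation of the two identities; no further analytic input beyond the nonvanishing hypothesis at $\frkp$ is needed.
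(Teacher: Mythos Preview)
Your proposal is correct and is precisely the argument the paper has in mind: the paper states only that the result follows by combining Propositions \ref{prop:21} and \ref{prop:32}, and your write-up carries out exactly that combination, including the one nontrivial check that $V^T_\frkl\simeq\calc_\frkl$ for $\frkl\neq\frkp$ so the local Whittaker factors agree.
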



\section{Siegel series}\label{sec:4}

In this section we drop the subscript $_\frkp$. 
Thus $k$ is a nonarchimedean local field of characteristic zero with integer ring $\frko$. 
We denote the maximal ideal of $\frko$ by $\frkp$ and the order of the residue field $\frko/\frkp$ by $q$. 
Fix a prime element $\vpi$ of $\frko$. 
We define the additive order $\ord:k^\times\to\ZZ$ by $\ord(\vpi^i\frko^\times)=i$. 

Let $T\in\frac{1}{2}\cale_g(\frko)$ with $\det T\neq 0$. 
Denote the conductor of $\chi^T$ by $\frkd^T$. 
Put
\begin{align*}
D_T&=(-4)^{[g/2]}\det T, \\
e^T&=\begin{cases}
\ord D_T &\text{if $g$ is odd, }\\ 
\ord D_T-\ord\frkd^T &\text{if $g$ is even, } 
\end{cases}\\
\xi^T&=\begin{cases}
1 &\text{if $D_T\in k^{\times 2}$, }\\
-1 &\text{if $D_T\notin k^{\times 2}$ and $\frkd^T=\frko$, }\\  
0 &\text{if $D_T\notin k^{\times 2}$ and $\frkd^T\neq\frko$. }
\end{cases}  
\end{align*}
 
The Siegel series associated to $T$ is defined by
\[b(T,s)=\sum_{z\in\Sym_g(k)/\Sym_g(\frko)} \psi(-\tr(T z))\nu[z]^{-s}, \]
where $\nu[z]=[z\frko^g+\frko^g:\frko^g]$ and $\psi$ is an arbitrarily fixed additive character on $k$ which is trivial on $\frko$ but nontrivial on $\frkp^{-1}$. 
As is well-known, there exists a polynomial $\bet(T,X)\in\ZZ[X]$ such that $\bet(T,q^{-s})=b(T,s)$. 
Moreover, this polynomial $\bet(T,X)$ is divisible by the following polynomial 
\[\gam^T(X)
=(1-X)\prod_{j=1}^{[g/2]}(1-q^{2j}X^2)\times
\begin{cases}
1 &\text{if $g$ is odd, } \\
\frac{1}{1-\xi^T q^{g/2}X} &\text{if $g$ is even. }
\end{cases} \]
Put
\begin{align*}
\bet(T,X)&=\gam^T(X)F^T(X), &     
\calf^T(X)&=X^{-e^T/2}F^T(q^{-(g+1)/2}X).
\end{align*}
If $g$ is even, then $\calf^T\in\QQ[\sqrt{q}][X+X^{-1}]$. 
If $g$ is odd, then $\calf^T\in\QQ\bigl[\sqrt{X},\frac{1}{\sqrt{X}}\bigl]$. 
 
Let $\calc$ be a $g$-dimensional quadratic space over $k$. 
Recall that $S$ is the matrix for the quadratic form on $\calc$ with respect to a fixed basis of $L$, where $L$ is an integral lattice of $\calc$ as explained at the beginning of Section \ref{sec:3}. 
If $g$ is even, $\chi=\chi^\calc$ is unramified and $\det(2S)\in\frko^\times$, then Lemma 14.8 combined with Proposition 14.3 of \cite{Sh2} gives
\beq
\alp\left(S\perp\frac{1}{2}\begin{pmatrix} & \ono_r \\ \ono_r & \end{pmatrix} ,T\right)=\bet(T,\chi(\vpi)q^{-(g+2r)/2}). \label{tag:41}
\eeq  

For the rest of this paper we require $g$ to be even.

\begin{proposition}\label{prop:41}
If $g$ is even, $\chi$ is unramified, $\chi^T=\chi$, $\eta^T=-1$, $\eta^\calc=1$ and $L$ is a self-dual lattice of $\calc$, then
\[\frac{\partial}{\partial s}W_T\Big(f_{\ch\La L^g\Ra}^{(s)}\Big)\Big|_{s=-1/2}
=-\frac{\sqrt{\frkd_k}^g\log q}{\gam(\calc)^g}\frac{\xi^T}{\sqrt{q}^g}\gam^T\biggl(\frac{\xi^T}{\sqrt{q}^g}\biggl)\frac{\partial F^T}{\partial X}\biggl(\frac{\xi^T}{\sqrt{q}^g}\biggl). \] 
\end{proposition}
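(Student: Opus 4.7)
The plan is to identify $W_T\bigl(f^{(s)}_{\ch\La L^g\Ra}\bigr)$ with an explicit rational function of $q^{-s}$ built from the Siegel series, then differentiate at $s=-1/2$ and simplify using the factorization $\bet=\gam^T F^T$ together with Lemma \ref{lem:22}(\ref{lem:221}). For each integer $r\geq 0$, I would enlarge the quadratic space to $\calv_r=\calc\oplus\calh^r$, equipped with the self-dual lattice $L\oplus\frko^{2r}$. A direct parity calculation using $\det\calh=-1$ gives $\chi^{\calv_r}=\chi^\calc$, and the two sections $f^{(s)}_{\ch\La L^g\Ra}$ and $f^{(s)}_{\ch\La(L\oplus\frko^{2r})^g\Ra}$ then transform under $P_g(k)$ by the identical character $\chi^\calc(\det a)|\det a|^{s+(g+1)/2}$: the $|\det a|^r$-shift coming from the $\bfm(a)$-action of the Weil representation of $\calv_r$ exactly cancels the shift of $s_0$ hidden in the definition of $f^{(s)}_\vph$. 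Both sections are $K_\frkp$-spherical and equal $1$ at the identity, so by uniqueness of the spherical vector in the degenerate principal series they coincide for generic $s$.

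Applying Proposition \ref{prop:31} to $\calv_r$, together with (\ref{tag:41}) and $[L^*:L]=1$, I would deduce
\[W_T\bigl(f^{(r-1/2)}_{\ch\La L^g\Ra}\bigr)=\frac{\sqrt{\frkd_k}^g}{\gam(\calc)^g}\bet\bigl(T,\chi(\vpi)q^{-(g+2r)/2}\bigr)\]
for every $r\geq 0$. Since the left-hand side is a rational function of $q^{-s}$ that agrees with $\frac{\sqrt{\frkd_k}^g}{\gam(\calc)^g}\bet\bigl(T,\chi(\vpi)q^{-s-(g+1)/2}\bigr)$ at the infinite family of points $s=r-1/2$, the two rational functions coincide identically in $s$.

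Differentiating at $s=-1/2$ and setting $X_0=\xi^T q^{-g/2}$ (noting $\chi(\vpi)=\xi^T$ under our unramified hypothesis $\chi^T=\chi$), the chain rule gives
\[\frac{\partial}{\partial s}W_T\bigl(f^{(s)}_{\ch\La L^g\Ra}\bigr)\Big|_{s=-1/2}=-\frac{\sqrt{\frkd_k}^g\log q}{\gam(\calc)^g}\,X_0\,\bet'(T,X_0),\]
where $\bet'$ denotes the $X$-derivative. The combined hypotheses $\eta^T=-1\neq 1=\eta^\calc$ and $\chi^T=\chi^\calc$ imply $T$ is not represented by $\calc$, so Lemma \ref{lem:22}(\ref{lem:221}) forces $\bet(T,X_0)=0$. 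A brief manipulation yields $\gam^T(X)=(1-X)\prod_{j=1}^{g/2-1}(1-q^{2j}X^2)(1+\xi^T q^{g/2}X)$, which at $X_0$ is a product of nonzero factors; hence $F^T(X_0)=0$ and $\bet'(T,X_0)=\gam^T(X_0)(F^T)'(X_0)$. Substituting gives the stated identity. The hard part is the interpolation step, which truly depends on the parity identity $\chi^{\calv_r}=\chi^\calc$ and on the exact cancellation between the Weil-representation dimension shift and the $s_0$-shift; the final differentiation and the cancellation of $\gam^T$-singularities are routine once that identification is in place.
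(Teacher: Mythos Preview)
Your proposal is correct and follows essentially the same route as the paper. The paper obtains the identity $W_T\bigl(f^{(s)}_{\ch\La L^g\Ra}\bigr)=\gam(\calc)^{-g}\sqrt{\frkd_k}^g\,\bet\bigl(T,\xi^T q^{-(g+1+2s)/2}\bigr)$ by citing Proposition~\ref{prop:31}, (\ref{tag:41}) and Lemmas~A.2--A.3 of \cite{K1}; you spell out this identification via the coincidence of spherical sections for $\calc$ and $\calv_r$ and interpolation at the points $s=r-\tfrac12$, which is precisely the content of those cited lemmas, and then both arguments finish by differentiating and invoking Lemma~\ref{lem:22}(\ref{lem:221}) to kill the $F^T(X_0)\,(\gam^T)'(X_0)$ term.
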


\begin{proof}
By assumption $\lim_{s\to -1/2}W_T(f_\vph^{(s)})=0$ in view of Lemma \ref{lem:22}(\ref{lem:221}). 
We combine Proposition \ref{prop:31} and (\ref{tag:41}) with Lemmas A.2-A.3  of \cite{K1} to see that 
\begin{align*}
W_T\Big(f_\vph^{(s)}\Big)
&=\gam(\calc)^{-g}\sqrt{\frkd_k}^g\bet\Big(T,\xi^T q^{-(g+1+2s)/2}\Big)\\
&=\gam(\calc)^{-g}\sqrt{\frkd_k}^g\gam^T\Big(\xi^T q^{-(g+1+2s)/2}\Big)F^T\Big(\xi^T q^{-(g+1+2s)/2}\Big). 
\end{align*}
Since $\chi^T=\chi$, we see that $F^T(\xi^T q^{-g/2})=0$. 
We can obtain the stated identity by differentiating this equality at $s=-\frac{1}{2}$. 
\end{proof}

\begin{definition}\label{def:41}
Let $T=(t_{ij})\in\frac{1}{2}\cale_g(\frko)\cap\GL_g(k)$. 
We denote by $S(T)$ the set of all nondecreasing sequences $(a_1,\dots,a_g)$ of nonnegative integers such that $\ord t_{ii}\geq a_i$ and $\ord(2t_{ij})\geq\frac{a_i+a_j}{2}$ for $1\leq i,j\leq g$. 
The Gross--Keating invariant $\GK(T)$ of $T$ is the greatest element of $\bigcup_{U\in\GL_g(\frko)}S(T[U])$ with respect to the lexicographic order. 
\end{definition}

Here, the lexicographic order is defined as follows: 
$(y_1,\dots,y_g)$ is greater than $(z_1,\dots,z_g)$ if there is an integer $1\leq j\leq g$ such that $y_i=z_i$ for $i<j$ and $y_j>z_j$. 
Ikeda and Katsurada \cite{IK2} define a set $\EGK(T)$ of invariants of $T$ attached to $\GK(T)$, which they call the extended Gross--Keating datum of $T$. 
They associated to an extended Gross--Keating datum $H$ a polynomial $\calf^H(Y,X)\in\ZZ[Y^{1/2},Y^{-1/2},X,X^{-1}]$ and show that 
\[\calf^{\EGK(T)}(\sqrt{q},X)=\calf^T(X). \]
When $g$ is even and $\frkd^T=\frko$, one can associate to $\EGK(T)$ truncated extended Gross--Keating datum $\EGK(T)'$ of length $g-1$ by Proposition 4.4 of \cite{IK2}. 
By Definitions 4.2-4.4 of \cite{IK2}  
\begin{align*}
\calf^{\EGK(T)}(Y,X)=&Y^{\frke'/2}X^{-(\frke-\frke'+2)/2}\frac{1-\xi^T Y^{-1}X}{X^{-1}-X}\calf^{\EGK(T)'}(Y,YX)\\
&+Y^{\frke'/2}X^{(\frke-\frke'+2)/2}\frac{1-\xi^T Y^{-1}X^{-1}}{X-X^{-1}}\calf^{\EGK(T)'}(Y,YX^{-1}),  
\end{align*} 
where $\GK(T)=(a_1,\cdots,a_g)$, $\frke=2\left[\frac{a_1+\cdots+a_g}{2}\right]$ and $\frke'=a_1+\cdots+a_{g-1}$. 
It is worth noting that since $\frkd^T=\frko$, we have $\frke=a_1+\cdots+a_g=e^T$. 
We put 
\[F^H(X)=(q^{(g+1)/2}X)^{\frke/2}\calf^H(\sqrt{q}, q^{(g+1)/2}X).\]

If $q$ is odd, then $T$ is equivalent to a diagonal matrix $\diag[t_1,\cdots,t_g]$ with $\ord t_1\leq\cdots\leq \ord t_g$ and the (naive) extended Gross--Keating datum $\EGK(T)=(a_1,\cdots,a_g;\vep_1,\dots,\vep_g)$ is given by 
\begin{align*}
a_i&=\ord t_i, & 
T^{(i)}&=\diag[t_1,\cdots,t_i], & 
\vep_i&=\begin{cases}
\eta^{T^{(i)}} &\text{if $i$ is odd, }\\
\xi^{T^{(i)}} &\text{if $i$ is even }
\end{cases}
\end{align*}
and $\EGK(T)'=(a_1,\cdots,a_{g-1};\vep_1,\dots,\vep_{g-1})$. 

\begin{theorem}\label{thm:41}
Assume that $g$ is even and that $\frkd^T=\frko$. 
Then 
\[F^H(\xi^Tq^{-g/2})=q^{e^T/2}F^{H'}(\xi^Tq^{-g/2}), \] 
where we put $H=\EGK(T)$ and $H'=\EGK(T)'$. 
If $\eta^T=-1$, then
\[\frac{\xi^T}{\sqrt{q}^g}\frac{\partial F^H}{\partial X}\biggl(\frac{\xi^T}{\sqrt{q}^g}\biggl)
=\frac{F^{H'}(\xi^T q^{(2-g)/2})}{q-1}
-\sqrt{q}^{e^T}\frac{\xi^T}{\sqrt{q}^g}\frac{\partial F^{H'}}{\partial X}\biggl(\frac{\xi^T}{\sqrt{q}^g}\biggl). \]
\end{theorem}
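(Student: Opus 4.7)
My strategy is to exploit the Ikeda--Katsurada inductive formula displayed just before the statement of Theorem \ref{thm:41}, which expresses $\calf^{\EGK(T)}(Y, X)$ as a two-term sum whose coefficients involve $\calf^{\EGK(T)'}(Y, YX^{\pm 1})$ and explicit rational factors in $Y$ and $X$. The key observation is that the point $X_0 := \xi^T \sqrt{q}$, which corresponds to $Z_0 := \xi^T q^{-g/2}$ under the normalization $F^H(Z) = (q^{(g+1)/2}Z)^{\frke/2} \calf^H(\sqrt{q}, q^{(g+1)/2}Z)$, is precisely where the numerator $1 - \xi^T Y^{-1}X$ of the first summand vanishes: at $Y = \sqrt{q}$ and $X = X_0$ we have $\xi^T Y^{-1}X_0 = (\xi^T)^2 = 1$, because the hypothesis $\frkd^T = \frko$ forces $\xi^T \in \{\pm 1\}$.

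For the value identity, I substitute $(Y, X) = (\sqrt{q}, X_0)$ directly: the first summand drops out, and the surviving second summand simplifies, using $(1 - q^{-1})/(X_0 - X_0^{-1}) = \xi^T/\sqrt{q}$ together with the internal evaluation $YX_0^{-1} = \xi^T$, to a constant multiple of $\calf^{H'}(\sqrt{q}, \xi^T)$. Multiplying by the outer factor $X_0^{\frke/2}$ and using $\frke = e^T$ (which holds under $\frkd^T = \frko$) identifies the resulting expression with $q^{e^T/2} F^{H'}(Z_0)$, via the analogous normalization for $F^{H'}$ (with $g-1$ in place of $g$, so using $q^{g/2}$ in place of $q^{(g+1)/2}$; note $q^{g/2} Z_0 = \xi^T$).

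For the derivative identity, I differentiate the recursion in $X$ before specializing at $X_0$. Because the first summand vanishes at $X_0$, the product rule applied to it retains only the contribution of $\partial_X(1 - \xi^T Y^{-1} X) = -\xi^T/\sqrt{q}$ multiplied by the rest evaluated at $X_0$. The denominator $X_0^{-1} - X_0 = -\xi^T(q-1)/\sqrt{q}$ is what generates the factor $(q-1)^{-1}$ appearing on the right-hand side, and the internal evaluation $\calf^{H'}(\sqrt{q}, YX_0) = \calf^{H'}(\sqrt{q}, \xi^T q)$ matches $F^{H'}(\xi^T q^{(2-g)/2})$ via $q^{g/2} \cdot \xi^T q^{1-g/2} = \xi^T q$. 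The full $X$-derivative of the second summand, combined with the chain rule for $F^H$ and the already-established value identity used to eliminate the lower-order piece, then produces the remaining term $-\sqrt{q}^{\,e^T} (\xi^T/\sqrt{q}^{\,g})\, \partial F^{H'}/\partial X(\xi^T/\sqrt{q}^{\,g})$.

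The main technical burden of this approach is the careful bookkeeping of the powers of $\sqrt{q}$ and of $\xi^T$ that accumulate through the substitutions, complicated by the fact that $\calf^{H'}$ lies in $\QQ[\sqrt{q}][X^{1/2}, X^{-1/2}]$ (because $g-1$ is odd), so intermediate expressions genuinely involve half-integer powers. One must check that all such powers combine, using $(\xi^T)^2 = 1$ and the parity of $\frke - \frke'$, into exactly the signs prescribed by the theorem. The hypothesis $\eta^T = -1$ in the derivative identity encodes the additional parity constraint needed to make this sign analysis close up; verifying this compatibility is what I expect to be the most delicate step.
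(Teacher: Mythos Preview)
Your approach is essentially the paper's: substitute $Y=\sqrt q$ in the Ikeda--Katsurada recursion, evaluate at $X_0=\xi^T\sqrt q$ for the value identity, and differentiate in $X$ at $X_0$ for the derivative identity. The paper does exactly this, with the same observation that the factor $1-\xi^T q^{-1/2}X$ in the first summand vanishes at $X_0$.

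One point in your plan deserves correction. You describe the hypothesis $\eta^T=-1$ as a ``parity constraint needed to make the sign analysis close up,'' and you anticipate the main difficulty to be bookkeeping of half-integer powers. That is not where the content lies. Since $\frkd^T=\frko$ forces $e^T$ to be even, the powers of $\xi^T$ and $\sqrt q$ sort themselves out without any extra hypothesis. The actual role of $\eta^T=-1$ is the \emph{vanishing} $F^T(\xi^T q^{-g/2})=0$ (this is the input from Proposition~\ref{prop:41}: when $\eta^T=-1$ the matrix $T$ is not represented by the split form, so the local Whittaker value at $s=-\tfrac12$ is zero). Via your already-established value identity this yields $F^{H'}(\xi^T q^{-g/2})=0$. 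You need this because the product-rule differentiation of the \emph{second} summand produces, besides the desired $(\partial F^{H'}/\partial X)(\xi^T q^{-g/2})$ term, extra terms proportional to $F^{H'}(\xi^T q^{-g/2})$ coming from differentiating the prefactor $X^{(e^T+2)/2}(1-\xi^T q^{-1/2}X^{-1})/(X-X^{-1})$; a short computation shows these do \emph{not} cancel against the lower-order piece $\tfrac{e^T}{2}F^H(Z_0)$ from the chain rule for $F^H$, so they must be killed by the vanishing itself. The paper makes this explicit: it records $F^{H'}(\xi^T q^{-g/2})=0$ before differentiating. Once you insert that step, the rest of your outline is exactly the paper's argument.
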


\begin{proof}
Substituting $Y=\sqrt{q}$ into $\calf^H(Y,X)$, we get 
\begin{align*}
\calf^H(\sqrt{q},X)
=&X^{-(\frke+2)/2}\frac{1-\xi^Tq^{-1/2}X}{X^{-1}-X}(\sqrt{q}X)^{\frke'/2}\calf^{H'}(\sqrt{q},\sqrt{q}X)\\
&+X^{(\frke+2)/2}\frac{1-\xi^Tq^{-1/2}X^{-1}}{X-X^{-1}}(\sqrt{q}X^{-1})^{\frke'/2}\calf^{H'}(\sqrt{q},\sqrt{q}X^{-1})\\
=&X^{-(e^T+2)/2}\frac{1-\xi^Tq^{-1/2}X}{X^{-1}-X}F^{H'}(q^{(1-g)/2}X)\\
&+X^{(e^T+2)/2}\frac{1-\xi^Tq^{-1/2}X^{-1}}{X-X^{-1}}F^{H'}(q^{(1-g)/2}X^{-1}).  
\end{align*} 
By letting $X=\xi^T\sqrt{q}$, we get 
\[(\xi^T\sqrt{q})^{-e^T/2}F^H(\xi^Tq^{-g/2})=\calf^H(\sqrt{q},\xi^T\sqrt{q})=(\xi^T\sqrt{q})^{e^T/2}F^{H'}(\xi^Tq^{-g/2}). \] 
In the proof of Proposition \ref{prop:41} we have seen that if $\eta^T=-1$, then 
\[\calf^H(\sqrt{q},\xi^T\sqrt{q})=\calf^T(\xi^T\sqrt{q})=(\xi^T\sqrt{q})^{-e^T/2}F^T(\xi^Tq^{-g/2})=0, \]
and hence $F^{H'}(\xi^Tq^{-g/2})=0$. 
We can prove the stated identity by differentiating the equality above at $X=\xi^T\sqrt{q}$. 
\end{proof}

We will use the following result in the next section. 

\begin{lemma}\label{lem:41}
If $T$ is a split symmetric half-integral matrix of size $4$ over $\ZZ_p$, then there exists a nondegenerate isotropic symmetric half-integral matrix $B$ of size $3$ over $\ZZ_p$ such that $F^B_p=F^{\EGK_p(T)'}_p$. 
\end{lemma}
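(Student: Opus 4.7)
The plan is to construct $B$ explicitly as a principal $3\times 3$ submatrix of a convenient representative of $T$, and then to verify two properties: that $\EGK_p(B)=\EGK_p(T)'$ (which suffices, since the polynomial $F_p^H$ depends only on the EGK datum $H$), and that $B$ is nondegenerate and isotropic as a quadratic $\ZZ_p$-lattice.

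For odd $p$, where every element of $\frac{1}{2}\cale_4(\ZZ_p)\cap\GL_4(\QQ_p)$ is $\GL_4(\ZZ_p)$-equivalent to a diagonal matrix, I would diagonalize $T\sim\diag[t_1,t_2,t_3,t_4]$ with $\ord t_1\le\ord t_2\le\ord t_3\le\ord t_4$ and set $B:=\diag[t_1,t_2,t_3]$. Using the explicit description of the naive extended Gross--Keating datum recalled immediately before Theorem \ref{thm:41}, one reads off $\EGK_p(B)=(a_1,a_2,a_3;\vep_1,\vep_2,\vep_3)$ with $a_i=\ord t_i$ and $\vep_i$ determined by the principal minors $B^{(i)}=T^{(i)}$ for $i\le 3$; this agrees with $\EGK_p(T)'$ by definition of the truncation in Proposition 4.4 of \cite{IK2}. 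Nondegeneracy is clear from $\det B=t_1t_2t_3\ne 0$, and isotropy of $B$ over $\QQ_p$ follows from the splitness of $T$: a $3$-dimensional nondegenerate subspace of a quadratic space of Witt index $2$ necessarily contains a nonzero isotropic vector.

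The $p=2$ case cannot be handled by diagonalization and is the main obstacle. The strategy would be to replace $T$ by a $\GL_4(\ZZ_2)$-equivalent Jordan representative — an orthogonal sum of scaled unary blocks and scaled binary blocks of hyperbolic or anisotropic type. Splitness of $T$ sharply restricts the admissible Jordan types, and one extracts a size-$3$ orthogonal summand $B$ by removing an appropriate component. The verification $\EGK_2(B)=\EGK_2(T)'$ then becomes a case-by-case matching against the dyadic truncation recipe of Ikeda--Katsurada \cite{IK2}; this bookkeeping is the delicate part of the argument. Isotropy of $B$ over $\QQ_2$, and nondegeneracy, are established exactly as in the odd case, the former again via the Witt-index-$2$ subspace argument applied to the ambient split $4$-dimensional space associated to $T$.
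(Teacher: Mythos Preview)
Your odd-$p$ argument matches the paper's exactly. For $p=2$ the paper takes a shorter, non-constructive route: rather than extracting $B$ from a Jordan representative of $T$ and verifying $\EGK_2(B)=\EGK_2(T)'$ case by case, it invokes Proposition~6.4 of \cite{IK} (every naive EGK datum is realized by some half-integral form) together with Theorem~1.1 of \cite{IK2} ($F_p$ depends only on the EGK datum), producing $B$ abstractly. This bypasses the dyadic bookkeeping entirely.

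Your constructive route faces an obstacle you may be underestimating: when the Jordan splitting of $T$ over $\ZZ_2$ consists of two binary blocks (for instance $T\sim 2^aH\perp 2^bH$), there is no rank-$3$ orthogonal summand to remove, and breaking a binary block into unary pieces does not in general yield a sublattice whose EGK agrees with the abstract truncation $\EGK_2(T)'$. Resolving this essentially forces one to pass to a reduced or optimal representative in the sense of \cite{IK}, at which point one is leaning on the same machinery the paper cites. The compensating advantage of your approach, were it completed, is that $B$ would sit inside $T$ as a sublattice, so isotropy of $B$ follows directly from your Witt-index argument; in the paper's abstract construction $B$ is not embedded in $T$, and isotropy must instead be read off from the sign $\vep_3$ recorded in $\EGK_2(T)'$, a point the paper leaves implicit.
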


\begin{proof}
If $p=2$, then the existence of such $B$ follows from Proposition 6.4 of \cite{IK} and Theorem 1.1 of \cite{IK2}.
If $p$ is odd, then $T$ is equivalent to a diagonal matrix $\diag[t_1,\cdots,t_4]$ with $\ord t_1\leq\cdots\leq \ord t_4$.
Then we may choose $B$ as $\diag[t_1,\cdots,t_3]$ by using the argument explained in the paragraph just before Theorem \ref{thm:41}.
\end{proof}


\section{The case $g=4$}\label{sec:5}

We discuss the classical Eisenstein series of Siegel. 
For this it is simplest to work over $k=\QQ$. 
Provided that $g$ is a multiple of $4$, we consider the series  
\[E_g(Z,s)=\sum_{\{C,D\}}\det(CZ+D)^{-g/2}|\det(CZ+D)|^{-s}(\det Y)^{s/2}. \]
Here the sum extends over all symmetric coprime pairs modulo $\GL_g(\ZZ)$. 
Let $\calc_p=\calh(\QQ_p)^{g/2}$ be the split quadratic space of dimension $g$ over $\QQ_p$. 
Define $\vph=\otimes_p\vph_p$ by taking $\vph_p=\ch\La\Mat_{g,g}(\ZZ_p)\Ra\in\cals(\calc_p^g)$. 
It is known that $E_g\bigl(Z,s+\frac{1}{2}\bigl)=E(Z,f_\vph^{(s)})$ (see \S \Roman{fou}.2 of \cite{K2}). 
The series is incoherent if and only if $\frac{g}{4}$ is odd due to Lemma \ref{lem:21}. 

Fix a positive definite symmetric half-integral matrix $T$ of size $g$. 
Recall that $\chi_T$ stands for the primitive Dirichlet character corresponding to $\chi^T$. 
The $T$-th Fourier coefficient of $E_g(Z,s)$ is given by 
\[A(T,Y,s)=\frac{a\bigl(T,Y,s-\frac{1}{2}\bigl)L(s,\chi_T)}{\zet\bigl(s+\frac{g}{2}\bigl)\prod_{i=1}^{g/2}\zet(2s+2i-2)}\prod_{p|D_T}F_p^T(p^{-(2s+g)/2}). \]
The $T$-th Fourier coefficient of $\frac{\partial}{\partial s}E_g(Z,s)|_{s=0}$ is given by 
\[C_g(T)=\frac{\partial}{\partial s}A(T,Y,s)|_{s=0}.\]
Recall that $\Dif(T)=\{p\;|\;\eta^T_p=-1\}$. 

\begin{proposition}\label{prop:51}
Assume that $\frac{g}{4}$ is odd. 
Let $T\in\frac{1}{2}\cale_g(\ZZ)\cap\Sym_g^+$. 
\begin{enumerate}
\renewcommand\labelenumi{(\theenumi)}
\item\label{prop:511} If $\chi_T=1$, then $C_g(T)=0$ unless $\Dif(T)$ is a singleton. 
\item\label{prop:512} If $\chi_T=1$ and $\Dif(T)=\{p\}$, then 
\[C_g(T)=-\frac{2^{(g+2)/2}p^{-(g+e^T_p)/2}\log p}{\zet\bigl(1-\frac{g}{2}\bigl)\prod_{i=1}^{(g-2)/2}\zet(1-2i)}\frac{\partial F^T_p}{\partial X}(p^{-g/2})\prod_{p\neq \ell|D_T}\ell^{-e^T_\ell/2}F_\ell^T(\ell^{-g/2}). \]
\item\label{prop:513} If $\chi_T\neq 1$, then  
\[C_g(T)=-\frac{2^{(g+2)/2}L(1,\chi_T)}{\zet\bigl(1-\frac{g}{2}\bigl)\prod_{i=1}^{(g-2)/2}\zet(1-2i)}\prod_{p|D_T}p^{-e^T_p/2}F_p^T(p^{-g/2}). \]
\end{enumerate}
\end{proposition}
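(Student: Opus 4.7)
The plan is to differentiate the explicit formula
\[
A(T,Y,s) = \frac{a(T,Y,s-\tfrac{1}{2})L(s,\chi_T)}{\zet(s+\tfrac{g}{2})\prod_{i=1}^{g/2}\zet(2s+2i-2)}\prod_{p\mid D_T}F_p^T(p^{-(2s+g)/2})
\]
at $s=0$ and trace where the zero comes from. Under the identification $E_g(Z, s+\tfrac{1}{2}) = E(Z, f_\vph^{(s)})$ with $\vph_p = \ch\La\Mat_{g,g}(\ZZ_p)\Ra$ and $\calc_p = \calh(\QQ_p)^{g/2}$, Lemma \ref{lem:21} tells us $\calc$ is incoherent precisely when $g/4$ is odd, so $E_g(Z,s)$ must vanish at $s=0$.

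At $s=0$ the factor $a(T,Y,s-\tfrac{1}{2})$ is finite and equals $c_gD_T^{-1/2}$ by (\ref{tag:22}), while $\zet(s+g/2)$ and $\zet(2s+2i-2)$ for $i\geq 2$ are nonzero (values of $\zet$ at integers $\geq 2$). The only factors that can vanish at $s=0$ are therefore $L(s,\chi_T)$ (paired with the $\zet(2s)$ coming from $i=1$) and the polynomials $F_p^T(p^{-(2s+g)/2})$. A direct evaluation of the rational function $\gam^T(X)$ at $X=\xi_p^T p^{-g/2}$ shows that the apparent pole of $\gam^T$ there (when $\xi_p^T\neq 0$) is cancelled by the $j=g/2$ factor of $\prod_{j}(1-q^{2j}X^2)$, so $\gam^T(\xi_p^T p^{-g/2})$ is finite and nonzero; combining this with Lemma \ref{lem:22}(\ref{lem:221}), Proposition \ref{prop:31} (with $r=0$) and identity (\ref{tag:41}) yields the crucial local fact that, for $p\nmid\frkd^T$, $F_p^T(\xi_p^T p^{-g/2})=0$ if and only if $\eta_p^T=-1$.

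For (1) and (2), $\chi_T=1$ gives $\frkd^T=1$ and $\xi_p^T=1$ for all $p$, and $L(s,1)/\zet(2s)\to 1$ as $s\to 0$. The zero of $A(T,Y,s)$ at $s=0$ therefore comes entirely from the product over $\Dif(T)$, with order of vanishing equal to $|\Dif(T)|$; since Lemma \ref{lem:21} forces this cardinality to be odd, either $|\Dif(T)|\geq 3$ (so $C_g(T)=0$, giving (1)) or $\Dif(T)=\{p\}$. In the latter case the Leibniz rule isolates the single vanishing factor, and
\[
\left.\frac{d}{ds}F_p^T(p^{-(2s+g)/2})\right|_{s=0} = -\log p\cdot p^{-g/2}\cdot\frac{\partial F_p^T}{\partial X}(p^{-g/2}).
\]
Combined with $D_T^{-1/2}=\prod_{p\mid D_T}p^{-e_p^T/2}$ (valid because $\frkd^T=1$) and the conversion
\[
\frac{c_g}{\zet(g/2)\prod_{i=1}^{g/2-1}\zet(2i)} = \frac{-2^{(g+2)/2}}{\zet(1-g/2)\prod_{i=1}^{(g-2)/2}\zet(1-2i)}
\]
obtained via the functional equation $\zet(1-s)=2(2\pi)^{-s}\cos(\pi s/2)\Gam(s)\zet(s)$ and the explicit form of $c_g$ from (\ref{tag:22}), this yields (2).

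For (3), $\chi_T\neq 1$ combined with $D_T=(-4)^{g/2}\det T>0$ (since $T$ is positive definite and $g\equiv 0\pmod 4$) implies that $\chi_T$ is an \emph{even} nontrivial quadratic character, hence $L(0,\chi_T)=0$. The functional equation $\Lambda(s,\chi_T)=\Lambda(1-s,\chi_T)$ (with trivial root number in the real even case) gives $L'(0,\chi_T)=\tfrac{1}{2}\sqrt{\frkd^T}\,L(1,\chi_T)$. Under the assumption $\prod_{p\mid D_T}F_p^T(p^{-g/2})\neq 0$ (otherwise both sides of (3) vanish by the order-of-vanishing argument above), differentiating $A(T,Y,s)$ at $s=0$, using $D_T^{-1/2}\sqrt{\frkd^T}=\prod_{p\mid D_T}p^{-e_p^T/2}$ (which follows for even $g$ from the definition $e_p^T=\ord_p D_T-\ord_p\frkd^T$), and applying the same functional-equation conversion as in (2) yields (3). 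I expect the main technical obstacle to be precisely this constant bookkeeping: verifying the displayed conversion identity above requires careful application of the Riemann functional equation together with the Legendre duplication formula implicit in $\Gam_g(g/2)=\pi^{g(g-1)/4}\prod_{j=1}^g\Gam(j/2)$, and tracking the sign contributed by $\cos(\pi g/4)$ and $\prod_i(-1)^i$, which combine to the required value precisely when $g/4$ is odd.
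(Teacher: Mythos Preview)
Your proof is correct and follows essentially the same approach as the paper's: both differentiate the explicit formula for $A(T,Y,s)$ at $s=0$, locate the simple zero (in the $F_p^T$ factor at $p\in\Dif(T)$ when $\chi_T=1$, in $L(s,\chi_T)$ when $\chi_T\neq 1$), and then convert the resulting constant via the functional equation for $\zeta$, the explicit value of $c_g=a(T,Y,-\tfrac12)D_T^{1/2}$, and $L'(0,\chi_T)=\tfrac12\sqrt{\frkd^T}L(1,\chi_T)$. The only cosmetic difference is that the paper cites Proposition~\ref{prop:21} for part~(\ref{prop:511}) rather than rederiving the local vanishing criterion for $F_p^T(\xi_p^Tp^{-g/2})$ as you do; your justification via Lemma~\ref{lem:22}(\ref{lem:221}), Proposition~\ref{prop:31} and (\ref{tag:41}) is exactly what underlies that proposition in this split-lattice setting.
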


\begin{proof}
We have already proved (\ref{prop:511}) in Proposition \ref{prop:21}. 
Taking 
\[\zet(2i)=(-1)^i\frac{(2\pi)^{2i}}{2(2i-1)!}\zet(1-2i)\] 
into account, we have 
\[\zet\biggl(\frac{g}{2}\biggl)\prod_{i=1}^{(g-2)/2}\zet(2i)=\frac{(2\pi)^{g^2/4}\zet\bigl(1-\frac{g}{2}\bigl)}{2^{g/2}\bigl(\frac{g}{2}-1\bigl)!}\prod_{i=1}^{(g-2)/2}\frac{\zet(1-2i)}{(2i-1)!}\]
Recall that $a\bigl(T,Y,-\frac{1}{2}\bigl)=\frac{2^g\pi^{g^2/2}}{\Gam_g(\frac{g}{2})D_T^{1/2}}$ by (\ref{tag:22}).  
Since
\begin{align*}
\vGm_g\biggl(\frac{g}{2}\biggl)&=\frac{\pi^{g^2/4}}{2^{(g^2-2g)/4}}\prod_{i=1}^{(g-2)/2}(2i)!, &
\zet(0)&=-\frac{1}{2}, & 
L'(0,\chi_T)&=\frac{\sqrt{\frkd^T}}{2}L(1,\chi_T), 
\end{align*}
we get (\ref{prop:512}) and (\ref{prop:513}). 
\end{proof}

Hereafter we let $g=4$. 
By a quaternion algebra over a field $k$ we mean a central simple algebra over $k$ of dimension $4$. 
Let $\BB_p$ denote the definite quaternion algebra over $k=\QQ$ that ramifies only at a prime number $p$. 
The reduced norm $\Nr$ on $\BB_p$ defines a positive definite quadratic space $\calv_p$. 
Fix a maximal order $\calo_p$ of $\BB_p$. 
Let $\vph_\ell\in\cals(\calc_\ell^g)$ be the characteristic function of $\Mat_2(\ZZ_\ell)^g$ and $\vph_p'\in\cals(\calv^g_p(\QQ_p))$ the characteristic function of $\calo_p^g\otimes\ZZ_p$. 
We regard $\vph'=\vph'_p\otimes(\otimes_{\ell\neq p}\vph_\ell)$ as the characteristic function of $\calo_p^g\otimes\hat\ZZ$. 
We write $S_p$ for the matrix representation of $\calv_p$ with respect to a $\ZZ$-basis of $\calo_p$. 
Put
\[S_0=\diag\biggl[\begin{pmatrix} 0 & \frac{1}{2} \\ \frac{1}{2} & 0\end{pmatrix}, \begin{pmatrix} 0 & \frac{1}{2} \\ \frac{1}{2} & 0\end{pmatrix}\biggl]. \]

\begin{lemma}\label{lem:51}
Let $T\in\Sym_g(\QQ_p)$. 
\begin{enumerate}
\renewcommand\labelenumi{(\theenumi)}
\item\label{lem:511} If $T\notin\frac{1}{2}\cale_4(\ZZ_p)$, then $W_T\Big(f_{\vph_p}^{(s)}\Big)$ is identically zero. 
\item\label{lem:512} If $T\in\frac{1}{2}\cale_4(\ZZ_p)$ with $\det T\neq 0$, $\chi^T=1$ and $\eta^T_p=-1$, then 
\[\lim_{s\to-1/2}\frac{W_{S_p}\Big(f_{\vph_p'}^{(s)}\Big)}{W_T\Big(f_{\vph_p'}^{(s)}\Big)}\frac{\frac{\partial}{\partial s}W_T\Big(f_{\vph_p}^{(s)}\Big)}{pW_{S_0}\Big(f_{\vph_p}^{(s)}\Big)}=\biggl(p^{-2}\frac{\partial F^{H'}_p}{\partial X}(p^{-2})-\frac{p^{-e^T_p/2}}{p-1}F_p^{H'}(p^{-1})\biggl)\log p, \]
where we put $H'=\EGK_p(T)'$.  
\end{enumerate}
\end{lemma}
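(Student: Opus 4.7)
Part (1) I would handle by a Weil-representation invariance argument. Observe that $\vph_p = \ch\La\Mat_2(\ZZ_p)^g\Ra$ is fixed by $\ome_p(\bfn(z_0))$ for every $z_0\in\Sym_g(\ZZ_p)$: the Weil operator $\ome_p(\bfn(z_0))$ multiplies by the phase $\bfe_p(\tr((u,u)z_0)/2)$, and since the determinant form makes $(u,u)\in\frac{1}{2}\cale_g(\ZZ_p)$ for all $u\in\Mat_2(\ZZ_p)^g$, that phase is trivial. Consequently the integrand in
\[W_T(f_{\vph_p}^{(s)})=\int_{\Sym_g(\QQ_p)}f_{\vph_p}^{(s)}\!\left(\begin{pmatrix}0 & \ono_g \\ -\ono_g & 0\end{pmatrix}\bfn(z)\right)\overline{\bfe_p(\tr(Tz))}\,\d z\]
is invariant under $z\mapsto z+z_0$ for $z_0\in\Sym_g(\ZZ_p)$; the integral thus vanishes identically in $s$ unless $z\mapsto\bfe_p(\tr(Tz))$ is trivial on $\Sym_g(\ZZ_p)$, which is equivalent to $T\in\frac{1}{2}\cale_g(\ZZ_p)$.

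For part (2), I would split the target as a product of three factors to be evaluated independently: $R_1=\lim_{s\to-1/2}W_{S_p}(f_{\vph_p'}^{(s)})/W_T(f_{\vph_p'}^{(s)})$, $R_2=\lim_{s\to-1/2}\partial_s W_T(f_{\vph_p}^{(s)})/W_{S_0}(f_{\vph_p}^{(s)})$, and the factor $1/p$. Following the template of the proof of Proposition \ref{prop:41}, combine Proposition \ref{prop:31}, the identity (\ref{tag:41}), and Lemmas A.2-A.3 of \cite{K1} to obtain
\[W_T(f_{\vph_p}^{(s)})=\gamma(\calc_p)^{-4}\bet(T,p^{-(5+2s)/2}),\qquad W_{S_0}(f_{\vph_p}^{(s)})=\gamma(\calc_p)^{-4}\bet(S_0,p^{-(5+2s)/2}),\]
using $\xi^T=1$. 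Since $S_0$ is unimodular split, $F_p^{S_0}\equiv 1$; a short calculation gives $\gamma^T(p^{-2})=\gamma^{S_0}(p^{-2})=2(1-p^{-2})^2$. The hypothesis $\eta^T_p=-1\ne\eta^{\calc_p}=1$ combined with Lemma \ref{lem:22}(\ref{lem:221}) forces $F_p^T(p^{-2})=0$, so the product rule yields $R_2=-p^{-2}(\log p)\,\partial_X F_p^T(p^{-2})$.

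Now apply Proposition \ref{prop:31} to $\vph_p'$ on $\calv_p$ (taking $r=0$ and $L=\calo_p$); the common normalization constant cancels in the ratio and leaves
\[R_1=\frac{\alp_p(S_p,S_p)}{\alp_p(S_p,T)}.\]
The key intermediate claim is that $R_1=p\cdot p^{-e^T_p/2}$. Granting this, insert Theorem \ref{thm:41} in the form
\[p^{-2}\frac{\partial F_p^T}{\partial X}(p^{-2})=\frac{F_p^{H'}(p^{-1})}{p-1}-p^{(e^T_p-4)/2}\frac{\partial F_p^{H'}}{\partial X}(p^{-2})\]
into $R_2$, multiply by $R_1/p=p^{-e^T_p/2}$, and simplify; the factor $p^{-e^T_p/2}$ converts $p^{(e^T_p-4)/2}$ into $p^{-2}$ (matching the first term of the claimed RHS) and produces $p^{-e^T_p/2}/(p-1)$ in the second, yielding exactly the asserted expression.

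The principal obstacle is the local-density identity $\alp_p(S_p,S_p)/\alp_p(S_p,T)=p^{1-e^T_p/2}$. Since $\det(2S_p)\notin\ZZ_p^\times$, (\ref{tag:41}) does not apply directly; I would prove this by either (a) invoking Shimura's general formula from \cite{Sh2} for local densities attached to non-unimodular quadratic forms, (b) using the explicit Ikeda--Katsurada expression in tandem with Lemma \ref{lem:41} to relate $\alp_p(S_p,T)$ to $F_p^{H'}$-data, or (c) a doubling argument comparing Whittaker coefficients for $\vph_p'$ with those for $\vph_p$ through an auxiliary see-saw. A sanity check at $T=S_p$ (where $e^T_p=2$ gives $R_1=1$) confirms the normalization, but the general case requires carefully tracking the Hasse-invariant and discriminant shifts that the quaternion order introduces.
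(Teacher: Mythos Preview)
Your argument is correct and follows the paper's proof essentially step for step: the paper likewise computes the two ratios $R_1$ and $R_2$ separately, obtaining $R_2$ from Proposition~\ref{prop:41} and Theorem~\ref{thm:41} exactly as you do, and then combines them. The identity you flag as the ``principal obstacle'', namely $\alp_p(S_p,T)=p^{(e^T_p-2)/2}\alp_p(S_p,S_p)$, is in fact classical---the paper dispatches it in one line by citing Hilfssatz~17 of Siegel~\cite{Si1}, so none of your proposed workarounds (a)--(c) is needed.
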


\begin{proof}
The first part is trivial. 
Since 
\[\alp_p(S_p,T)=p^{(e^T_p-2)/2}\alp_p(S_p,S_p)\]
by Hilfssatz 17 of \cite{Si1}, 
it follows from Proposition \ref{prop:31} that 
\[\lim_{s\to-1/2}\frac{W_{S_p}\Big(f_{\vph_p'}^{(s)}\Big)}{W_T\Big(f_{\vph_p'}^{(s)}\Big)}=p^{-(e^T_p-2)/2}. \] 
On the other hand, Proposition \ref{prop:41} and Theorem \ref{thm:41} give
\[\lim_{s\to-1/2}\frac{\frac{\partial}{\partial s}W_T\Big(f_{\vph_p}^{(s)}\Big)}{W_{S_0}\Big(f_{\vph_p}^{(s)}\Big)}
=\biggl(p^{(e^T_p-4)/2}\frac{\partial F^{H'}_p}{\partial X}(p^{-2})-\frac{F_p^{H'}(p^{-1})}{p-1}\biggl)\log p. \]
These complete our proof. 
\end{proof}

Let $\bar\FF_p$ be an algebraic closure of a finite field $\FF_p$ with $p$ elements. 
For two supersingular elliptic curves $E,E'$ over $\bar\FF_p$ we consider the free $\ZZ$-module $\Hom(E',E)$ of homomorphisms $E'\to E$ over $\bar\FF_p$ together with the quadratic form given by the degree. 
As $E$ and $E'$ are supersingular, $\Hom(E',E)$ has rank $4$ as a $\ZZ$-module. 
For two quadratic spaces over $\ZZ$ we write $N(L,L')$ for the number of isometries $L'\to L$.  

We are now ready to prove our main result. 

\begin{theorem}\label{thm:51}
If $T\in\frac{1}{2}\cale_4(\ZZ)$ is positive definite, $\chi_T=1$ and $\Dif(T)$ consists of a single prime $p$, then 
\[C_4(T)=2^6\cdot 3^2\biggl(p^{-2}\frac{\partial F^{H'}_p}{\partial X}(p^{-2})-\frac{F_p^{H'}(p^{-1})}{\sqrt{p}^{e^T_p}(p-1)}\biggl)\log p\sum_{(E',E)}\frac{N(\Hom(E',E),T)}{\sharp\Aut(E)\sharp\Aut(E')}, \]
where we put $H'=\EGK_p(T)'$ and where $(E',E)$ extends over all pairs of isomorphism classes of supersingular elliptic curves over $\bar\FF_p$. 
\end{theorem}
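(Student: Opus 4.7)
The argument combines three ingredients: the formula of Proposition~\ref{prop:51}(\ref{prop:512}) for $C_4(T)$, the inductive relation of Theorem~\ref{thm:41}, and the Siegel formula applied to the quaternionic form (Proposition~\ref{prop:32}) together with Deuring's correspondence.

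Since $\chi_T = 1$, the local invariant $\xi_p^T = 1$, so Theorem~\ref{thm:41} with $g = 4$ gives
\[
\frac{\partial F_p^T}{\partial X}(p^{-2}) = \frac{p^2\, F_p^{H'}(p^{-1})}{p-1} - \sqrt{p}^{\,e_p^T}\frac{\partial F_p^{H'}}{\partial X}(p^{-2}).
\]
Substituting this into Proposition~\ref{prop:51}(\ref{prop:512}) and using $\zet(-1) = -1/12$, so that $2^3/\zet(-1)^2 = 2^7 \cdot 3^2$, yields
\[
C_4(T) = 2^7 \cdot 3^2 \log p \left(p^{-2}\frac{\partial F_p^{H'}}{\partial X}(p^{-2}) - \frac{F_p^{H'}(p^{-1})}{\sqrt{p}^{\,e_p^T}(p-1)}\right) \prod_{\ell \neq p,\,\ell\mid D_T} \ell^{-e_\ell^T/2}\, F_\ell^T(\ell^{-2}).
\]
The theorem therefore reduces to the identity
\[
\prod_{\ell \neq p,\,\ell\mid D_T} \ell^{-e_\ell^T/2}\, F_\ell^T(\ell^{-2}) = \tfrac{1}{2}\sum_{(E',E)} \frac{N(\Hom(E',E),T)}{\sharp\Aut(E)\,\sharp\Aut(E')}.
\]

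To establish this, I would invoke the coherent Siegel formula for the global quadratic space $\calv = (\BB_p, \Nr)$. The hypotheses $\chi_T = 1$, $T$ totally positive definite, and $\Dif(T) = \{p\}$ imply via Lemma~\ref{lem:21} that $V^T$ is globally isomorphic to $\calv$: split at every $\ell \ne p$ with self-dual lattice $\Mat_2(\ZZ_\ell)$, and equal to the quaternion norm form at $p$. Take $L = \calo_p$ to be a fixed maximal order. Proposition~\ref{prop:31} combined with (\ref{tag:41}) then computes each local Whittaker factor at $\ell \ne p$ in terms of $F_\ell^T(\ell^{-2})$, while at $p$ the local Whittaker factor evaluates via Hilfssatz~17 of \cite{Si1} (as used in the proof of Lemma~\ref{lem:51}) to a constant times $p^{(e_p^T-2)/2}$. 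Proposition~\ref{prop:32} then expresses $R(L,T)/\frkm(L)$ as an explicit multiple of the Euler product $\prod_{\ell\mid D_T} \ell^{-e_\ell^T/2}\, F_\ell^T(\ell^{-2})$, with the constants coming from $c_g D_T^{-1/2}$ in (\ref{tag:22}), the Weil factors $\gamma(\calc_\ell)$, and the lattice discriminant $[\calo_p^*:\calo_p]^{g/2} = p^4$ at $p$.

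The final step is Deuring's correspondence, under which proper classes in the genus of $\calo_p$ are parametrized by pairs of isomorphism classes of supersingular elliptic curves $(E,E')$ over $\bar\FF_p$ via $\Hom(E',E)$ (with the degree quadratic form), and the orthogonal group order $\sharp\SO(\Hom(E',E))$ matches $\sharp\Aut(E)\,\sharp\Aut(E')$ up to $2$-torsion. Combining this with Eichler's mass formula for $\frkm(\calo_p)$ transforms $R(L,T)/\frkm(L)$ into the supersingular sum with an explicit rational factor; checking that this factor, together with all the constants collected above, isolates exactly the factor $\tfrac12$ produces the desired identity and reduces $2^7 \cdot 3^2$ to the stated $2^6 \cdot 3^2$. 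The main obstacle will be bookkeeping: carefully tracking the multiplicative constants (the Weil factors $\gamma(\calc_\ell)$, the global factor $c_g$, the $p$-adic discriminant contribution, and the Eichler mass) so as to confirm the precise numerical matching, and in particular to account for the factor of $2$ bridging Siegel's coherent mass side to the supersingular sum.
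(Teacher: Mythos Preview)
Your reduction via Proposition~\ref{prop:51}(\ref{prop:512}) and Theorem~\ref{thm:41} is correct, and the residual identity
\[
\prod_{\ell\neq p,\;\ell\mid D_T}\ell^{-e^T_\ell/2}F_\ell^T(\ell^{-2})=\tfrac{1}{2}\,R(\calo_p,T)
\]
is indeed the heart of the matter. The paper, however, does not verify this identity by unwinding the Siegel formula and tracking all constants as you propose. Instead it works directly with Proposition~\ref{prop:33} (which already packages $C(T,\vph)$ as $2\,R(L,T)/\frkm(L)$ times a single local ratio), computes that local ratio via Lemma~\ref{lem:51}(\ref{lem:512}), and then determines the remaining numerical constant $c$ by \emph{specializing to} $T=S_p$: one checks $R'(\calo_p,S_p)=1$ and applies Proposition~\ref{prop:32} at $T=S_p$, so that all the Weil factors, archimedean constants, and the mass $\frkm(\calo_p)$ cancel in the ratio, yielding $c=2^7\cdot 3^2$ with essentially no bookkeeping. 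Your route is certainly viable---the constants (the Eichler mass, the $\gam(\calc_\ell)$, the archimedean $c_4$, the discriminant $[\calo_p^*:\calo_p]$) do assemble to give the factor $\tfrac12$---but the paper's specialization trick is what replaces that computation, and you should be aware that this is where the two arguments diverge. For the passage to the supersingular sum, the paper simply cites Proposition~4.1 of \cite{W1} for the exact equality $R(\calo_p,T)=\sum_{(E',E)}N(\Hom(E',E),T)/(\sharp\Aut(E)\,\sharp\Aut(E'))$, rather than invoking Deuring's correspondence with an informal $2$-torsion adjustment.
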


\begin{proof}
Proposition \ref{prop:33} and (\ref{tag:32}) applied to $L=\calo_p$ gives 
\begin{align*}
C_4(T)
&=R'(\calo_p,T)c\lim_{s\to -1/2}\frac{W_{S_p}\Big(f_{\vph_p'}^{(s)}\Big)}{W_T\Big(f_{\vph_p'}^{(s)}\Big)}\frac{\frac{\partial}{\partial s}W_T\Big(f_{\vph_p}^{(s)}\Big)}{pW_{S_0}\Big(f_{\vph_p}^{(s)}\Big)},  
\end{align*}
where 
\[c=\frac{2p}{\frkm'(\calo_p)}\lim_{s\to-1/2}\frac{W_{S_0}\Big(f_{\vph_p}^{(s)}\Big)}{W_{S_p}\Big(f_{\vph_p'}^{(s)}\Big)}.\]
If $T=S_p$, then we claim that $R'(\calo_p,S_p)=1$.
To prove this, it suffices to show that 
$N(\scrl,S_p)=0$ if $\scrl$ is not isometric to $\calo_p$
and $N(\calo_p,S_p)=\sharp\O(\calo_p)$, where $\scrl \in \Xi'(\calo_p)$.
If $N(\scrl, S_p)\neq 0$, then there is an injection $f:\calo_p \rightarrow \scrl$ as a lattice preserving the associated quadratic forms. 
Thus we only need to show that $f$ is surjective.
If it is not surjective, then $\scrl$ and $\calo_p$ have different discriminant, which is a contradiction to the assumption that $\scrl$ and $\calo_p$ are in the same genus.

Applying Proposition \ref{prop:32} and (\ref{tag:32}) to $T=S_p$, we get  
\[\frac{2}{\frkm'(\calo_p)}=c_4D_{S_p}^{-1/2}\lim_{s\to-1/2}W_{S_p}\Big(f_{\vph_p'}^{(s)}\Big)\prod_{\ell\neq p}W_{S_p}\Big(f_{\vph_\ell}^{(s)}\Big). \]
It follows that 
\begin{align*}
c
&=pc_4D_{S_p}^{-1/2}\lim_{s\to-1/2}\prod_\ell W_{S_0}\Big(f_{\vph_\ell}^{(s)}\Big)\\
&=c_4\lim_{s\to-1/2}\prod_\ell \gam^S_\ell(\ell^{-(5+2s)/2})=\frac{c_4}{\zet(2)^2}\lim_{s\to-1/2}\frac{\zet\bigl(s+\frac{1}{2}\bigl)}{\zet(2s+1)}=2^7\cdot 3^2. 
\end{align*}
Since 
$R(\calo_p,T)=2R'(\calo_p,T)$ by (\ref{tag:31}) and (\ref{tag:32}), and 
\beq
R(\calo_p,T)=\sum_{\scrl\in\Xi(\calo_p)}\frac{N(\scrl,T)}{\sharp\SO(\scrl)}=\sum_{(E',E)}\frac{N(\Hom(E',E),T)}{\sharp\Aut(E)\sharp\Aut(E')} \label{tag:51}
\eeq
by Proposition 4.1 of \cite{W1}, our statement follows from Lemma \ref{lem:51}(\ref{lem:512}). 
\end{proof}

\begin{conjecture}\label{conj:51}
Let $\calv$ be a totally positive definite quadratic space over a totally real number field $k$ of dimension $g$. 
Fix a maximal integral lattice $L$ of $\calv$. 
Let $T\in\frac{1}{2}\cale_g(\frko)$ be totally positive definite. 
If $g$ is even and $\chi^\calv=1$, then there is a totally positive definite matrix $T'\in\frac{1}{2}\cale_{g-1}(\frko)$ such that 
\[R(L,T)=2R(L,T'). \]
\end{conjecture}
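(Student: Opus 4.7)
The plan is to apply Siegel's mass formula (Proposition \ref{prop:32}) to both sides and reduce the identity to a local statement about values of the polynomials $F^T_\frkp$ and $F^{T'}_\frkp$, which Theorem \ref{thm:41} then supplies. On the left,
\[\frac{R(L,T)}{\frkm(L)} = \frac{c_g}{2\sqrt{D_T}}\lim_{s\to -1/2}\prod_\frkp W_T\bigl(f^{(s)}_{\ch\La L^g_\frkp\Ra}\bigr).\]
Since $T'$ has size $g-1$ while $L$ has rank $g$, the corresponding mass formula for $T'$ comes from the Siegel--Weil formula applied to a Siegel Eisenstein series of genus $g-1$; an argument parallel to the proof of Proposition \ref{prop:32}, but with $s_0 = 0$ in place of $s_0 = -\tfrac{1}{2}$, produces
\[\frac{R(L,T')}{\frkm(L)} = \frac{c'_{g-1}}{2\sqrt{D_{T'}}}\lim_{s\to 0}\prod_\frkp W_{T'}\bigl(f^{(s)}_{\ch\La L^{g-1}_\frkp\Ra}\bigr),\]
where $c'_{g-1}$ is an explicit archimedean constant analogous to $c_g$ in (\ref{tag:22}).

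At each finite prime $\frkp$, combining Proposition \ref{prop:31} with (\ref{tag:41}) expresses the limiting Whittaker values in terms of $\gam^T_\frkp(\xi^T_\frkp q_\frkp^{-g/2}) F^T_\frkp(\xi^T_\frkp q_\frkp^{-g/2})$ and an analogous quantity for $T'$. The hypothesis $\chi^\calv = 1$ forces $\chi^T_\frkp = \chi_\frkp$ at every prime at which $T$ is represented by the genus of $L$, placing us in the setting of Theorem \ref{thm:41}, which gives
\[F^T_\frkp\bigl(\xi^T_\frkp q_\frkp^{-g/2}\bigr) = q_\frkp^{e^T_\frkp/2} F^{T'}_\frkp\bigl(\xi^T_\frkp q_\frkp^{-g/2}\bigr)\]
provided $\EGK_\frkp(T')$ agrees with the truncated datum $\EGK_\frkp(T)'$. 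Taking the product over $\frkp$, the factors $q_\frkp^{e^T_\frkp/2}$ reassemble into $\sqrt{D_T/D_{T'}}$, while the residual ratios of the finite factors $\gam^T_\frkp$ and of the Weil constants $\gam(\calc_\frkp)$, together with the archimedean constants $c_g$ and $c'_{g-1}$, should then collapse by a direct gamma-function calculation to the clean factor of $2$ predicted by the conjecture.

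The principal obstacle is the construction of a single global $T'\in\frac{1}{2}\cale_{g-1}(\frko)$ whose localizations satisfy $\EGK_\frkp(T') = \EGK_\frkp(T)'$ for every $\frkp$. Lemma \ref{lem:41} produces, at each finite prime, a local half-integral matrix $T'_\frkp$ of size $g-1$ with $F^{T'_\frkp}_\frkp = F^{\EGK_\frkp(T)'}_\frkp$; the case $\frkp\mid 2$ is handled as in the proof of that lemma via Proposition 6.4 of \cite{IK}. Gluing these local choices into a global totally positive definite $T'$ calls for a Hasse-principle argument for half-integral symmetric matrices of rank $g-1$: one must verify that the prescribed local quadratic invariants (Hasse--Witt invariants, discriminants, and quadratic defects at the primes dividing $D_T$) together with a totally positive definite archimedean signature form a coherent family, and that the incoherence obstruction recorded by Lemma \ref{lem:21} vanishes precisely under the assumption $\chi^\calv = 1$. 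This coherence verification, together with the precise bookkeeping of the factor of $2$ between the two Siegel--Weil identities of different genus, is the step I would expect to absorb most of the work.
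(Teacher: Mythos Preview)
The statement is a \emph{conjecture}; the paper does not prove it in general but only establishes the special case $k=\QQ$, $g=4$ (Proposition~\ref{prop:52}). Your outline broadly matches the paper's strategy for that case---Siegel's formula on both sides, Theorem~\ref{thm:41} for the comparison of $F$-polynomials, and a Hasse-principle argument to globalize $T'$---but two concrete points separate your sketch from a proof, and the paper resolves them only for $g=4$.

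First, you apply (\ref{tag:41}) and hence Theorem~\ref{thm:41} at every finite prime. But (\ref{tag:41}) requires $\det(2S)\in\frko_\frkp^\times$, i.e.\ that $L_\frkp$ be self-dual; this fails precisely at the primes $\frkp\in\Dif(\calv)$, where the local Whittaker value is not governed by the Siegel series of $T$ in the form you use. In the proof of Proposition~\ref{prop:52} the paper circumvents this by computing the local densities $\alp_p(S_p,T)$ and $\alp_p(S_p,T')$ at those bad primes directly, citing explicit formulas from \cite{W2} and \cite{CY} that are particular to the quaternionic maximal order of rank~$4$. It is exactly this bad-prime comparison, together with the archimedean density ratio, that produces the factor~$2$; your ``gamma-function calculation'' does not touch it. Second, Lemma~\ref{lem:41}, on which you rely for the local candidates $T'_\frkp$, is stated and proved only for $g=4$; extending it to general even $g$ would require the full strength of the Ikeda--Katsurada classification and is not supplied here.

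By contrast, the gluing step you flag as the principal obstacle is handled in the paper by a short Minkowski--Hasse argument (choose a global rational form $z$ in the prescribed local classes, then adjust by a matrix in $\GL_{g-1}(\QQ)$ to land in $\frac{1}{2}\cale_{g-1}(\ZZ)$). The genuine barrier to the general conjecture is the local density comparison at the non-unimodular primes, not the globalization.
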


\begin{proposition}\label{prop:52}
If $k=\QQ$ and $g=4$, then Conjecture \ref{conj:51} is true. 
\end{proposition}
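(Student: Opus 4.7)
If $R(L,T)=0$, then $T$ is not represented by $\calv$ at some place, and any positive definite $T'\in\frac12\cale_3(\ZZ)$ not represented at the same place (easy to produce over $\QQ$ by Hasse--Minkowski) has $R(L,T')=0$.  Henceforth assume $R(L,T)\neq 0$, which forces $\chi^T=\chi^\calv=1$ and makes $\Dif(T)$ coincide with the (odd-cardinality) finite ramification locus of the definite quaternion algebra $\BB$ realizing $\calv\simeq(\BB,\Nr)$; in particular $L$ may be taken to be a maximal order $\calo\subset\BB$.

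The first step is to produce a global $T'\in\frac12\cale_3(\ZZ)$ with $F^{T'}_\ell=F^{\EGK_\ell(T)'}_\ell$ at every prime $\ell$.  At $\ell\notin\Dif(T)$ this is Lemma~\ref{lem:41}; at odd $\ell\in\Dif(T)$ one can diagonalize $T_\ell$ and take $T'_\ell$ to be the top-left $3\times 3$ block, as in the discussion preceding Theorem~\ref{thm:41}; at $\ell=2\in\Dif(T)$ one must extend the argument of Lemma~\ref{lem:41} to the ramified dyadic situation to check realizability of the truncated EGK datum.  From $\EGK_\ell(T)'$ one reads off $(\det T'_\ell,\eta^{T'_\ell}_\ell)$, and the truncation nature of $\EGK'$ combined with $\chi^\calv=1$ forces the global product formula for the $\eta^{T'_\ell}_\ell$, so Hasse--Minkowski yields a global positive definite $T'$ with $T'\otimes\ZZ_\ell\sim T'_\ell$ at every prime.

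The second step is to compare $R(L,T)$ and $R(L,T')$ using two instances of the Siegel mass formula.  Proposition~\ref{prop:32} at $g=4$ expresses $R(L,T)/\frkm(L)$ as $\tfrac{1}{2}c_4D_T^{-1/2}\prod_\ell W_T\bigl(f^{(-1/2)}_{\ch\La L_\ell^4\Ra}\bigl)$, and the coherent Siegel--Weil identity (\ref{tag:21}) at $g=3$, $m=4$, $s_0=0$ (coherent by Lemma~\ref{lem:21}) gives the analogous expression for $R(L,T')/\frkm(L)$ with $W_{T'}$ evaluated at $s=0$.  At each prime, Proposition~\ref{prop:31}, equation (\ref{tag:41}), and Theorem~\ref{thm:41} reduce the ratio of local factors to $\ell^{e^T_\ell/2}$ times a quotient of $\gam^T_\ell$-factors.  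Multiplying over $\ell$, the $\ell^{e^T_\ell/2}$ accumulate to $\sqrt{D_T/D_{T'}}$ up to a bounded rational, the $\gam$-factors telescope against the zeta values in the normalizations, and the archimedean constants from (\ref{tag:22}) at $g=4$ (together with the analogous formula for $a(T',Y,0)$ at $g=3$, $m=4$ from \cite{Sh1}) combine to give exactly the factor $2$.

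The main obstacle will be twofold: first, extending Lemma~\ref{lem:41} to the ramified dyadic case and verifying that the local Hasse invariants of the constructed $T'_\ell$ glue to a global size-$3$ form; and second, the careful bookkeeping of $\Gam_g$-factors and residues of $\zet(s)$ needed to extract exactly the constant $2$ rather than some other rational.  As a sanity check, the case $\#\Dif(T)=1$ of this comparison is precisely the ratio $2^7/2^6$ between the numerical coefficients appearing in Theorem~\ref{thm:51} and in the expression for $C_4(T)$ obtained by combining Theorem~\ref{thm:12} with (\ref{tag:12}).
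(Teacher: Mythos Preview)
Your strategy matches the paper's: build $T'_\ell$ locally, globalize via Hasse--Minkowski, and compare $R(L,T)$ with $R(L,T')$ through two Siegel formulas, using Theorem~\ref{thm:41} to match the unramified local factors. The substantive gap is at the ramified primes $p\in\Dif(\calv)$.

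You propose to invoke Proposition~\ref{prop:31}, equation~(\ref{tag:41}) and Theorem~\ref{thm:41} at \emph{every} prime. But (\ref{tag:41}) requires $\det(2S)\in\frko_p^\times$, i.e.\ that $L_p$ be self-dual; for $p\in\Dif(\calv)$ the maximal order satisfies $[L_p^*:L_p]=p^2$. Thus neither $\alp_p(S_p,T)$ nor $\alp_p(S_p,T')$ is a value of the Siegel series $\bet_p(\,\cdot\,,X)$, and Theorem~\ref{thm:41} tells you nothing about their ratio. The dyadic obstacle you flag---realizing $\EGK_2(T)'$ by a genuine ternary form when $2\in\Dif(T)$---is therefore self-inflicted and beside the point: even after solving it you still could not compare the ramified local factors with the tools you cite.

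The paper sidesteps this. At each $p\in\Dif(\calv)$ it takes $T'_p$ to be an \emph{arbitrary} anisotropic ternary form over $\ZZ_p$, observing that $\alp_p(S_p,T'_p)$ is independent of the choice, and then inserts the explicit values
\[
\alp_p(S_p,T)=4p^{e^T_p/2}(p+1)^2,\qquad \alp_p(S_p,T')=2(p+1)(1+p^{-1})
\]
drawn from \cite{W2} and \cite{CY} (equivalently, from Shimura's exact mass formula). Combined with Theorem~\ref{thm:41} at the unramified primes and the archimedean comparison $d_\infty(L,T)=d_\infty(L,T')\det(2T)^{-1/2}\prod_{p\in\Dif(\calv)}p^{-1}$, these explicit ramified densities are exactly what produce the factor~$2$; the bookkeeping you anticipate as an obstacle is in fact short once they are known.
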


\begin{proof}
Since $R(L,T)=0$ unless $\Dif(T)=\Dif(\calv)$,  we may assume that 
\[\Dif(T)=\Dif(\calv). \]
Lemma \ref{lem:41} gives $T'_p\in\frac{1}{2}\cale_3(\ZZ_p)$ such that $F^{T'_p}_p=F^{\EGK_p(T)'}_p$ for every rational prime $p$. 
In addition, the proof of Lemma \ref{lem:41} yields that $T'_p$ is unimodular for almost all primes $p$.
Thus we can find a positive rational number $0<\del\in \QQ^\times$ such that $\del^{-1}\det T'_p\in\ZZ_p^\times$ for every $p\notin\Dif(\calv)$. 
For $p\in\Dif(\calv)$ we fix an arbitrary anisotropic ternary quadratic form $T_p'$ over $\ZZ_p$. 
Recall that $\alp_p(S_p,T_p')$ is independent of the choice of $T'_p$.  

Since $F^{uT'_p}_p=F^{T'_p}_p$ for $u\in\ZZ_p^\times$, there is no harm in assuming that $\del=\det T'_p$. 
Since $\eta^{T'_p}_p=1$ for $p\notin\Dif(\calv)$, the Minkowski-Hasse theorem gives $z\in\Sym_3(\QQ)$ which is positive definite and such that $z\in T'_p[\GL_3(\QQ_p)]$ for every $p$. 
Take $A\in\GL_3(\AAf)$ so that $z=T'_p[A_p]$ for every $p$. 
We can take $D\in\GL_3(\QQ)$ in such a way that $AD^{-1}\in\GL_3(\ZZ_p)$ for every $p$. 
Put $T'=z[D^{-1}]$. 
Then $T'\in T'_p[\GL_3(\ZZ_p)]$ for every $p$. 
In particular, $T'\in\frac{1}{2}\cale_3(\ZZ)$. 

In view of (\ref{tag:32}) it suffices to show that  
\[\frac{R'(L,T)}{\frkm'(L)}=2\frac{R'(L,T')}{\frkm'(L)}. \] 
We see by the Siegel formula that 
\[\frac{R'(L,T)}{\frkm'(L)}
=2^{-1}d_\infty(L,T)2^4\prod_{p\in\Dif(\calv)} \frac{\alp_p(S_p,T)}{2}\prod_{q\notin\Dif(\calv)}(1-q^{-2})^2F_q^T(q^{-2}). \]
Recall that the archimedean densities are given by 
\begin{align*}
d_\infty(L,T)&=\frac{\prod_{i=1}^4\frac{\pi^{i/2}}{\Gam\left(\frac{i}{2}\right)}}{\det(2T)^{1/2}[L^*:L]^2}, & 
d_\infty(L,T')&=\frac{\prod_{i=2}^4\frac{\pi^{i/2}}{\Gam\left(\frac{i}{2}\right)}}{[L^*:L]^{3/2}}. 
\end{align*}
Since 
\begin{align*}
\alp_p(S_p,T')&=2(p+1)(1+p^{-1}), & 
\alp_p(S_p,T)=4p^{e^T_p/2}(p+1)^2. 
\end{align*}
by \cite[Theorem 1.1]{W2} and Proposition 6.5 of \cite{CY}. 
The latter result can be derived more generally from Shimura's exact mass formula. 
Since $[L^*:L]=\prod_{p\in\Dif(\calv)}p^2$ by assumption, we have 
\[d_\infty(L,T)=[L^*:L]^{-2}\det(2T)^{-1/2}\prod_{i=1}^4\frac{\pi^{i/2}}{\Gam\left(\frac{i}{2}\right)}=\frac{d_\infty(L,T')}{\det(2T)^{1/2}}\prod_{p\in\Dif(\calv)}p^{-1}. \]
We combine these with Theorem \ref{thm:41} to obtain
\[\frac{R'(L,T)}{\frkm'(L)}
=d_\infty(L,T')2^3\prod_{p\in\Dif(\calv)}\alp_p(S_p,T')\prod_{q\notin\Dif(\calv)}(1-q^{-2})^2F_q^{T'}(q^{-2}). \]
The final expression equals $2\frac{R'(L,T')}{\frkm'(L)}$ by the Siegel formula. 
\end{proof}

\begin{corollary}\label{cor:51}
If $T$ is a positive definite symmetric half-integral matrix of size $4$ which satisfies $\chi^T=1$ and $\eta^T_\ell=1$ for $\ell\neq p$, then there exists a positive definite symmetric half-integral matrix $T'$ of size $3$ such that 
\[\sum_{(E',E)}\frac{N(\Hom(E',E),T)}{\sharp\Aut(E)\sharp\Aut(E')}=2\sum_{(E',E)}\frac{N(\Hom(E',E),T')}{\sharp\Aut(E)\sharp\Aut(E')}, \]
where $(E,E')$ extends over all pairs of isomorphism classes of supersingular elliptic curves over $\bar\FF_p$. 
\end{corollary}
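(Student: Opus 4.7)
The plan is to derive the corollary almost directly from Proposition \ref{prop:52} combined with the supersingular interpretation of representation numbers that was already exploited in the proof of Theorem \ref{thm:51}; no new analytic input is needed.

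First I would specialize Proposition \ref{prop:52} to the setting $k=\QQ$, $g=4$, $\calv=\calv_p=(\BB_p,\Nr)$ with maximal integral lattice $L=\calo_p$. The hypotheses on $T$ in the corollary, namely $\chi^T=1$ and $\eta^T_\ell=1$ for $\ell\neq p$, together with $\eta^T_p=-1$ (forced by the product formula of Lemma \ref{lem:21}, since the genus invariant $\chi^T=1$ together with $\eta^T_\ell=1$ off $p$ leaves $p$ as the sole difference place), match exactly the global invariants of $\calv_p$, so that $\Dif(T)=\Dif(\calv_p)=\{p\}$. This matching is essential because $R(\calo_p,T)$ is automatically zero otherwise. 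Proposition \ref{prop:52} then produces a positive definite $T'\in\frac{1}{2}\cale_3(\ZZ)$ satisfying
\[R(\calo_p,T)=2\,R(\calo_p,T').\]

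Second I would invoke identity (\ref{tag:51}) used inside the proof of Theorem \ref{thm:51}, which expresses $R(\calo_p,T)$ as a sum over pairs of isomorphism classes of supersingular elliptic curves over $\bar\FF_p$. The identity rests on Proposition 4.1 of \cite{W1} and is the classical reinterpretation of the elements of $\Xi(\calo_p)$ as Hom modules $\Hom(E',E)$, together with the identification of $\sharp\SO(\scrl)$ with $\sharp\Aut(E)\sharp\Aut(E')$. Because this is an equality of quadratic lattices, independent of which symmetric matrix one is trying to represent, it applies unchanged to both $R(\calo_p,T)$ and $R(\calo_p,T')$. Substituting the two resulting expressions into the relation from the previous step yields the stated identity.

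The main conceptual content of the corollary is therefore entirely contained in Proposition \ref{prop:52}, whose proof in turn depends on the inductive identity $F^H_p(\xi^T_p p^{-g/2})=p^{e^T_p/2}F^{H'}_p(\xi^T_p p^{-g/2})$ from Theorem \ref{thm:41} and on the existence provided by Lemma \ref{lem:41} of a ternary form with $F^{T'_p}_p=F^{H'}_p$. The only mild check to carry out here is that applying (\ref{tag:51}) in arity three is legitimate, which is immediate because the proof of that identity is lattice-by-lattice in $\Xi(\calo_p)$ and is insensitive to $g$. So I do not expect any genuine obstacle; the corollary is essentially a translation of Proposition \ref{prop:52} from the language of representations of lattices in a genus into the language of homomorphisms between supersingular elliptic curves.
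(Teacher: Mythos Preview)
Your proposal is correct and follows essentially the same route as the paper: the paper's proof of Corollary \ref{cor:51} also combines Proposition \ref{prop:52} (applied with $L=\calo_p$) with the identification $R(\calo_p,\cdot)=\sum_{(E',E)}N(\Hom(E',E),\cdot)/(\sharp\Aut(E)\sharp\Aut(E'))$ coming from Proposition 4.1 of \cite{W1}, applied to both $T$ and $T'$. Your explicit verification that $\eta^T_p=-1$ is forced and that the supersingular identification is insensitive to the size of the matrix being represented are useful remarks that the paper leaves implicit.
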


\begin{proof}
Proposition 4.1 of \cite{W1} gives 
\[R(\calo_p,T')=\sum_{L\in\Xi(\calo_p)}\frac{N(L,T')}{\sharp\SO(L)}=\sum_{(E',E)}\frac{N(\Hom(E',E),T')}{\sharp\Aut(E)\sharp\Aut(E')}. \]
We can derive Corollary \ref{cor:51} from (\ref{tag:51}) and Proposition \ref{prop:52}. 
\end{proof}

Let $T\in\frac{1}{2}\cale_4(\ZZ_p)$ be an anisotropic symmetric matrix with (naive) extended Gross-Keating invariant $(a_1,a_2,a_3,a_4;\vep_1,\vep_2,\vep_3,\vep_4)$. 
Note that $\vep_1=\vep_4=1$ by definition. 
One can easily see that $\vep_2\neq 1$ and $\vep_3=-1$. 
Proposition 5.3 of \cite{CY} gives a partition $\{1,2,3,4\}=\{i,j\}\cup\{k,l\}$ such that 
\[a_i\equiv a_j\not\equiv a_k\equiv a_l\pmod 2. \]

\begin{lemma}\label{lem:53}
\begin{enumerate}
\renewcommand\labelenumi{(\theenumi)}
\item\label{lem:531} If $a_1\not\equiv a_2\pmod 2$, then 
\begin{align*}
F_p^{T'}(p^{-1})
=&\frac{p^{a_1+1}-1}{(p-1)(p^3-1)}\biggl(p^{\{a_1+3(a_2+1)\}/2}-\frac{p^{a_1+1}+1}{p+1}\biggl)\\
&-\frac{p^{(a_1+a_2+2a_3+1)/2}}{p-1}\biggl\{(a_1+1)p^{(a_1+a_2+1)/2}-\frac{p^{a_1+1}-1}{p-1}\biggl\}. 
\end{align*}
\item\label{lem:532} If $a_1\equiv a_2\pmod 2$, then \begin{align*}
F_p^{T'}(p^{-1})
=&\frac{p^{a_1+1}-1}{(p-1)(p^3-1)}\biggl(p^{(a_1+3a_2)/2}-\frac{p^{a_1+1}+1}{p+1}\biggl)\\
&-\frac{p^{(a_1+a_2+2a_3+2)/2}}{p-1}\biggl\{(a_1+1)p^{(a_1+a_2)/2}-\frac{p^{a_1+1}-1}{p-1}\biggl\}\\
&+p^{(a_1+3a_2)/2}\frac{p^{a_1+1}-1}{p^2-1}(p^{a_1-a_2+1}+1). 
\end{align*}\end{enumerate}
\end{lemma}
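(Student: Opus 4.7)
The plan is to evaluate $F_p^{T'}(p^{-1}) = F_p^{H'}(p^{-1})$ by iterating twice the Ikeda--Katsurada recursion from \cite{IK2} recorded in Section~\ref{sec:4}, which expresses $\calf^H(Y,X)$ as a two-term sum involving $\calf^{H^\sharp}(Y, YX^{\pm 1})$ for the length-one truncation $H^\sharp$ of $H$. Beginning with $H' = (a_1, a_2, a_3; 1, \vep_2, -1)$, a first application reduces it to the length-$2$ datum $H'' = (a_1, a_2; 1, \vep_2)$, and a second application reduces $H''$ to the length-$1$ datum $(a_1; 1)$, for which $\calf$ is a single explicit geometric-series-type polynomial. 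After the two reductions one specializes $Y = \sqrt{p}$ and $X = p^{-1}$ and converts back to $F_p^{H'}$ via the normalization $F^{H}(X) = (p^{(g+1)/2}X)^{\frke/2}\calf^H(\sqrt{p}, p^{(g+1)/2}X)$; tracing the specialization point through the two recursion steps yields four evaluations of the length-$1$ base polynomial at $Y=\sqrt{p}$ and $Z \in \{p^2, p, 1, p^{-1}\}$.

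The dichotomy on the parity of $a_1$ and $a_2$ reflects the fact that the exponents $(\frke - \frke^\sharp + 2)/2$ in the recursion are determined by the parities of the partial sums $a_1 + \cdots + a_i$, which in turn govern both the $X$-exponents on each recursion step and the effective value of the $\xi$-parameter entering the length-$2$ stage. The extra third summand
\[
p^{(a_1+3a_2)/2}\frac{p^{a_1+1}-1}{p^2-1}(p^{a_1-a_2+1}+1)
\]
in case (\ref{lem:532}) is the boundary contribution that survives when $a_1 \equiv a_2 \pmod 2$ but cancels when the parities differ; it originates from the $(1 - \xi Y^{-1}X^{\pm 1})$ numerators at the second recursion step, which pair differently against the $X^{\pm(\frke-\frke^\sharp+2)/2}$ prefactors depending on the parity.

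The main obstacle is the bookkeeping across the two iterations: tracking the $\xi$-parameter at each truncation, handling the parity of the intermediate $\frke$'s, collecting the four evaluations of the length-$1$ base polynomial into finite geometric sums $\sum_{i=0}^{a_1} p^{\alpha i}$ that account for the factors $(p^{a_1+1}-1)/(p-1)$ and $(p^{a_1+1}-1)/(p^2-1)$ appearing in the statement, and then combining everything without sign or exponent errors into the compact closed forms displayed in (\ref{lem:531}) and (\ref{lem:532}). Once the recursion is correctly set up and the parity cases are organized, the remainder is a direct (if lengthy) geometric-series calculation.
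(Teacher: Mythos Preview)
Your approach is sound in principle but differs from the paper's, and there is one point that needs care. The paper does not iterate the recursion at all: it quotes directly from \S 8 of \cite{IK2} a closed-form triple-sum expression for $F_p^{\EGK_p(T)'}(p^{-2}X)$ with $\EGK_p(T)'=(a_1,a_2,a_3;1,\vep_2,\vep_3)$, specializes at $X=p$ and $\vep_3=-1$, and then inputs $\vep_2=0$ or $\vep_2=-1$ according to the parity of $a_1-a_2$ (using Proposition~2.2 of \cite{IK} and Proposition~5.4 of \cite{CY}). This yields the two displayed formulas after a single geometric-series simplification. Your route, by contrast, rebuilds that closed form from scratch by two recursion steps and is therefore longer, though it has the virtue of not relying on the length-$3$ table from \cite{IK2}.

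The caveat: the recursion recorded in Section~\ref{sec:4} is the \emph{even}-length step (length $g$ to $g-1$ under $\frkd^T=\frko$), and its shape---with the $\xi^T$-numerators and the exponent $(\frke-\frke'+2)/2$---is specific to that parity. Your first iteration goes from length $3$ (odd) to length $2$, so you cannot literally reuse the displayed formula; you need the odd-length recursion from \cite{IK2}, whose numerators involve the $\eta$-invariant rather than $\xi$ and whose exponent bookkeeping differs. As long as you pull the correct odd-length formula from Definitions~4.2--4.4 of \cite{IK2} for that step (and the even-length formula for the second step, from $H''=(a_1,a_2;1,\vep_2)$ down to $(a_1;1)$), the four evaluations you describe assemble exactly into the stated expressions, with the parity split manifesting through the value of $\vep_2$ just as in the paper's proof.
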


\begin{proof}
We write the naive extended Gross-Keating invariant of $T$ as 
\[\EGK_p(T)=(a_1,a_2,a_3,a_4;1,\vep_2,\vep_3,1). \] 
Let $\sig$ be either $1$ or $2$ according as $a_1-a_2$ is odd or even.    
Section 8 of \cite{IK2} expresses $F^{\EGK_p(T)'}_p(X)$ in terms of $\EGK_p(T)'=(a_1,a_2,a_3;1,\vep_2,\vep_3)$: 
\begin{multline*}
F_p^{\EGK_p(T)'}(p^{-2}X)=\sum_{i=0}^{a_1}\sum_{j=0}^{(a_1+a_2-\sig)/2-i}p^{i+j}X^{i+2j}\\
\vep_3\sum_{i=0}^{a_1}\sum_{j=0}^{(a_1+a_2-\sig)/2-i}p^{(a_1+a_2-\sig)/2-j}X^{a_3+\sig+i+2j}\\
+\vep_2^2p^{(a_1+a_2-\sig+2)/2}\sum_{i=0}^{a_1}\sum_{j=0}^{a_3-a_2+2\sig-4}\vep_2^jX^{a_2-\sig+2+i+j}. 
\end{multline*}
We now specialize the formula to $X=p$ and $\vep_3=-1$. 
Then 
\begin{align*}
F_p^{T'}(p^{-1})
=&\frac{p^{a_1+1}-1}{(p-1)(p^3-1)}\biggl(p^{\{a_1+3(a_2-\sig+2)\}/2}-\frac{p^{a_1+1}+1}{p+1}\biggl)\\
&-\frac{p^{(a_1+a_2+2a_3+\sig)/2}}{p-1}\biggl((a_1+1)p^{(a_1+a_2-\sig+2)/2}-\frac{p^{a_1+1}-1}{p-1}\biggl)\\
&+\vep_2^2p^{\{a_1+3(a_2-\sig+2)\}/2}\frac{(p^{a_1+1}-1)(1-(\vep_2p)^{a_1-a_2+2\sig-3})}{(p-1)(1-\vep_2p)}. 
\end{align*}
Since $\vep_2=0$ or $-1$ according as $a_1-a_2$ is odd or even by Proposition 2.2 of \cite{IK} and Proposition 5.4 of \cite{CY}, we obtain the stated formulas.  
\end{proof}

The degree $\deg\scrz(B)$ is defined in (\ref{tag:12}) for positive definite symmetric half-integral $3\times 3$ matrices $B$ such that $\Dif(B)$ is a singleton. 

\begin{corollary}\label{cor:52}
Let $T$ be a positive definite symmetric half-integral $4\times 4$ matrix such that $\chi_T=1$ and $\Dif(T)=\{p\}$. 
Let $\sig$ be either $1$ or $2$ according as $a_1-a_2$ is odd or even. 
If $\deg\scrz(T')\neq 0$, then 
\[\biggl|\frac{C_4(T)}{-2^8\cdot 3^2\cdot \deg\scrz(T')}-1\biggl|<\frac{4}{p\sqrt{p}}\biggl(p^{-(a_4-3+\sig)/2}+\frac{4p^{-(a_4-a_1)/2}}{a_1+1}\biggl), \]
where $\GK_p(T)=(a_1,a_2,a_3,a_4)$. 
In particular, 
\begin{align*}
\biggl|\frac{C_4(T)}{-2^8\cdot 3^2\cdot \deg\scrz(T')}-1\biggl|&<\frac{20}{p\sqrt{p}}, & 
\lim_{e_p^T\to\infty}\frac{C_4(T)}{-2^9\cdot 3^2\cdot \deg\scrz(T')}&=1. 
\end{align*}
\end{corollary}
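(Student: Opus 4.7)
The plan is to reduce the estimate to an explicit comparison of the polynomial $F_p^{T'}(p^{-1})$ and its derivative $\frac{\partial F_p^{T'}}{\partial X}(p^{-2})$, both of which admit closed-form expressions via Ikeda--Katsurada. First I would combine Theorem \ref{thm:51} with the formula (\ref{tag:12}) for $\deg\scrz(T')$ (using Corollary \ref{cor:51}, so that $R(\calo_p,T)=2R(\calo_p,T')$ and the representation-number factor cancels) to get the clean identity
\[
\frac{C_4(T)}{-2^8\cdot 3^2\cdot \deg\scrz(T')}-1 = -\frac{p^2\,F_p^{T'}(p^{-1})}{\sqrt{p}^{\,e^T_p}(p-1)\,\dfrac{\partial F_p^{T'}}{\partial X}(p^{-2})}.
\]
Everything after this step is an analytic estimate of the right-hand side.

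Next, I would compute $\frac{\partial F_p^{T'}}{\partial X}(p^{-2})$ in parallel with Lemma \ref{lem:53}. Writing $G(X)=F_p^{\EGK_p(T)'}(p^{-2}X)$ for the three-double-sum formula from Section 8 of [IK2] quoted in the proof of Lemma \ref{lem:53}, one has $F_p^{T'}(p^{-1})=G(p)$ and $\frac{\partial F_p^{T'}}{\partial X}(p^{-2})=p^{2}G'(1)$. Differentiating the three sums with respect to $X$, evaluating at $X=1$, and summing the inner geometric series yields a closed form entirely analogous to the formulas in parts (\ref{lem:531})--(\ref{lem:532}) of Lemma \ref{lem:53}, separated according to whether $\sig=1$ or $\sig=2$. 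In both $G(p)$ and $G'(1)$ the term with the largest $p$-power arises from the second (that is, $\vep_3=-1$) double sum with $(i,j)=(a_1,0)$ and is proportional to $(a_1+1)p^{(a_1+a_2+2a_3+\sig)/2}/(p-1)$.

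Isolating this dominant contribution in both numerator and denominator, I would show that the $(a_1+1)$ factor cancels in the leading order and the main term of the ratio equals $p^{-3/2}\cdot p^{-(a_4-3+\sig)/2}$ up to a bounded absolute constant, using $e_p^T=a_1+a_2+a_3+a_4$ (valid since $\chi_T=1$ forces $\frkd^T=\frko$, so $e_p^T=\ord_p D_T$). The remaining monomials in $G(p)$ and $G'(1)$ -- namely those from the first double sum, the subleading part of the second, and the third sum -- are smaller by at least a factor of $p^{-(a_4-a_1)/2}$, and summing up the weights $i+2j$ and $i+j$ introduced by differentiation produces a factor of $(a_1+1)$ in the denominator of the error. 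Collecting constants yields the claimed bound
\[
|R|<\frac{4}{p\sqrt{p}}\left(p^{-(a_4-3+\sig)/2}+\frac{4\,p^{-(a_4-a_1)/2}}{a_1+1}\right).
\]

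The two consequences are then immediate: since always $p^{-(a_4-3+\sig)/2}\leq 1$ and $\frac{4p^{-(a_4-a_1)/2}}{a_1+1}\leq 4$, the coarse bound $20/(p\sqrt{p})$ follows; and the limit follows because $a_4\geq e_p^T/4\to\infty$, so both terms in the explicit bound tend to $0$. The main obstacle is step two: the derivative $G'(1)$ produces coefficients $(i+2j)$, $(i+j)$ and $(a_2-\sig+2+i+j)$ inside each double sum, so the geometric-series manipulation is heavier than in Lemma \ref{lem:53} and one must keep track of every subleading monomial to pin down the explicit constant $4$ in front of each term of the bound.
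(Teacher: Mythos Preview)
Your reduction to the ratio
\[
\frac{C_4(T)}{-2^8\cdot 3^2\cdot \deg\scrz(T')}-1=-\frac{p^2\,F_p^{T'}(p^{-1})}{\sqrt{p}^{\,e^T_p}(p-1)\,\partial F_p^{T'}/\partial X(p^{-2})}
\]
is exactly how the paper proceeds (it is a restatement of Theorem~\ref{thm:12}), and the subsequent strategy of bounding the numerator via Lemma~\ref{lem:53} and the denominator from below is the same. The one difference is the source of the denominator estimate: you propose to differentiate the Ikeda--Katsurada triple sum and isolate the dominant monomial by hand, whereas the paper simply invokes the closed lower bound
\[
-p^{-2}\frac{\partial F^{H'}_p}{\partial X}(p^{-2})\geq (a_1+1)p^{(a_1+a_2-(2-\sig))/2}
\]
from (2.12)--(2.13) of \cite{W2}. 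This lets the paper skip the computation you flag as the ``main obstacle''; your route is more self-contained but heavier, and would reproduce exactly this inequality. One small point on the limit statement: $a_4\geq e_p^T/4\to\infty$ handles the first term, but for the second you should also note that if $a_4-a_1$ stays bounded then $a_1\to\infty$, so $\frac{4p^{-(a_4-a_1)/2}}{a_1+1}\to 0$ in either case.
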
 

\begin{proof}
By (2.12) and (2.13) of \cite{W2}
\begin{align*}
-p^{-2}\frac{\partial F^{H'}_p}{\partial X}(p^{-2})&\geq (a_1+1)p^{(a_1+a_2)/2}\biggl(\frac{a_3-a_2+2\sig}{\sqrt{p}^\sig}+\vep_2^2\frac{a_3-a_2+1}{2}\biggl)\\
&\geq (a_1+1)p^{(a_1+a_2-(2-\sig))/2}. 
\end{align*}
Recall that if $\sig=1$, then $a_1<a_2\leq a_3\leq a_4$ while if $\sig=2$, then $a_1\leq a_2<a_3\leq a_4$. 
An examination of the proof of Lemma \ref{lem:53} confirms that 
\begin{multline*}
\biggl|\frac{F_p^{H'}(p^{-1})}{\sqrt{p}^{e^T_p}(p-1)}\biggl|\leq\frac{a_1+1}{(p-1)^2}p^{(a_1+a_2-a_4+2)/2}+\frac{p^{a_1+a_2-(a_3+a_4+3\sig)/2+4}}{(p-1)^2(p^3-1)}\\
+\vep_2^2\frac{p^{2a_1+2-(a_3+a_4)/2}}{(p-1)^2(p+1)}+\vep_2^2\frac{p^{a_1+a_2+1-(a_3+a_4)/2}}{(p-1)^2(p+1)}\\
<4p^{(a_1+a_2)/2-1}\{(a_1+1)p^{-a_4/2}+2p^{-(a_4-a_1+3\sig)/2}+2\vep_2^2p^{-(a_4-a_1+1)/2}\}. 
\end{multline*}
Now our proof is completed by Theorem \ref{thm:12}. 
\end{proof}



\end{document}